\renewcommand{\epsilon}{\varepsilon}
\DeclareMathOperator{\area}{area}
\DeclareMathOperator{\sech}{sech}
\def\R{\mathbb{R}}
\def\N{\mathbb{N}}
\def\a{\alpha}
\def\e{\epsilon}
\def\t{\tau}
\def\th{\theta}
\def\o{\omega}
\def\k{\kappa}
\def\tilde{\widetilde}
\def\ov{\overline}
\newcommand{\pd}{\partial}
\newcommand{\cd}{\nabla}
\def\ba #1\ea {\begin{align} #1\end{align}}
\def\bann #1\eann {\begin{align*} #1\end{align*}}
\def\ben #1\een {\begin{enumerate} #1\end{enumerate}}
\def\bi #1\ei {\begin{itemize}\renewcommand\labelitemi{--} #1\end{itemize}}
\newcommand{\inner}[2]{\left\langle#1,#2\right\rangle} 
\newtheorem{theorem}{Theorem}[section]
\newtheorem*{theorem*}{Theorem}
\newtheorem{lemma}[theorem]{Lemma}
\newtheorem{proposition}[theorem]{Proposition}
\newtheorem{remark}[theorem]{Remark}
\newtheorem{corollary}[theorem]{Corollary}
\newtheorem*{conjecture*}{Conjecture}
\newtheorem{claim}{Claim}[theorem]
\newtheorem*{claim*}{Claim}
\newtheoremstyle{TheoremNum}
        {\topsep}{\topsep}              
        {\itshape}                      
        {}                              
        {\bfseries}                     
        {.}                             
        { }                             
        {\thmname{#1}\thmnote{ \bfseries #3}}
    \theoremstyle{TheoremNum}
\author{Theodora Bourni}
\author{Mat Langford}
\address{Department of Mathematics, University of Tennessee Knoxville, Knoxville TN, 37996-1320}
\email{tbourni@utk.edu, mlangford@utk.edu}
\address{School of Mathematical and Physical Sciences, The University of Newcastle, Newcastle, NSW, Australia, 2308}
\email{mathew.langford@newcastle.edu.au}
\date{\today}
\title[Convex ancient free boundary csf in the disc.]{Classification of convex ancient free boundary curve shortening flows in the disc.}
\begin{document}

\begin{abstract}
We classify convex ancient curve shortening flows in the disc with free boundary on the circle. 
\end{abstract}

\maketitle

\setcounter{tocdepth}{1}
\tableofcontents

\section{Introduction}

Curve shortening flow is the gradient flow of length for regular curves. It models the evolution of grain boundaries \cite{Mullins,vonNeumann} and the shapes of worn stones \cite{Firey} in two dimensions, and has been exploited in a multitude of further applications (see, for example, \cite{MR1813971}).

The evolution of closed planar curves by curve shortening was initiated by Mullins \cite{Mullins} and was later taken up by Gage \cite{Ga84} and Gage--Hamilton \cite{GaHa86}, who proved that closed convex curves remain convex and shrink to ``round'' points in finite time. Soon after, Grayson showed that closed embedded planar curves become convex in finite time under the flow, thereafter shrinking to round points according to the Gage--Hamilton theorem. Different proofs of these results were discovered later by others \cite{An12,AndrewsBryanisoperimetric,AndrewsBryandistance,MR1369140,Huisken96}. 

Ancient solutions to geometric flows (that is, solutions defined on backwards-infinite time-intervals) are important from an analytical standpoint as they model singularity formation \cite{MR1375255}. They also arise in quantum field theory, where they model the ultraviolet regime in certain Dirichlet sigma models \cite{Bakas2}. They have generated a great deal of interest from a purely geometric standpoint due to their symmetry and rigidity properties. Indeed, ancient solutions to curve shortening flow of convex planar curves have been classified through the work of Daskalopoulos--Hamilton--\v Se\v sum \cite{DHS} and the authors in collaboration with Tinaglia \cite{BLT3}. Bryan and Louie \cite{BrLo} proved that the shrinking parallel is the only convex ancient solution to curve shortening flow on the two-sphere, and Choi and Mantoulidis showed that it is the only embedded ancient solution on the two-sphere with uniformly bounded length 
\cite{ChMa}.

The natural Neumann boundary value problem for curve shortening flow, called the \emph{free boundary problem}, asks for a family of curves whose endpoints lie on (but are free to move on) a fixed barrier curve which is met by the solution curve orthogonally. Study of the free boundary problem was initiated by Huisken \cite{HuiskenNonparametric} 
and further developed by Stahl \cite{Stahl2,Stahl1}. In particular, Stahl proved that convex curves with free boundary on a smooth, convex, locally uniformly convex barrier remain convex and shrink to a point on the barrier curve. 

The analysis of ancient solutions to free boundary curve shortening flow remains in its infancy. Indeed, to our knowledge, the only examples previously known seem to be those inherited from closed or complete examples (one may restrict the shrinking circle, for example, to the upper halfplane).

We provide here a classification of convex\footnote{A free boundary curve in the open disc $B^2$ is \emph{convex} if it bounds a convex region in $B^2$ and \emph{locally uniformly convex} if it is of class $C^2$ and its curvature is positive.} ancient free boundary curve shortening flows in the disc. 

\begin{theorem}\label{thm:classification}
Modulo rotation about the origin and translation in time, there exists exactly one convex, locally uniformly convex ancient solution to free boundary curve shortening flow in the disc. It converges to the point $(0,1)$ as $t\to 0$ and smoothly to the segment $[-1,1]\times\{0\}$ as $t\to-\infty$. It is invariant under reflection across the $y$-axis. As a graph over the $x$-axis, it satisfies
\[
\mathrm{e}^{\lambda^2t}y(x,t)\to A\cosh(\lambda x)\;\;\text{uniformly in $x$ as}\;\; t\to-\infty
\]
for some $A>0$, where $\lambda$ is the solution to $\lambda\tanh\lambda=1$.
\end{theorem}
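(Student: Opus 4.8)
The strategy is a standard two-part structure for classifying ancient flows: an *existence* part, constructing the desired solution and establishing its asymptotics, and a *uniqueness* part, showing every convex locally uniformly convex ancient free boundary flow must coincide with it up to the stated symmetries. I expect the paper to organize things this way, and I would too.

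I'll write $\Gamma_t$ for the evolving curve, with convex region $\Omega_t\subset B^2$, and I'll use the support function or the curvature-as-a-function-of-normal-angle parametrization as the main analytic tool, as in \cite{DHS, BLT3}.

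\textbf{Monotonicity and the backwards limit.} First I would exploit that convexity is preserved (Stahl \cite{Stahl1, Stahl2}) and that under free boundary csf in the disc the enclosed area is strictly decreasing, so $\mathrm{area}(\Omega_t)$ increases to some limit $a_\infty\in(0,|B^2|]$ as $t\to-\infty$. A key geometric input is that a convex free boundary curve in the disc meeting $\partial B^2$ orthogonally and enclosing area close to the maximal possible must be close to a diameter; more precisely I would show the inradius and circumradius of $\Omega_t$ (or better, the reflection of $\Gamma_t$ across $\partial B^2$, using the orthogonality to get a $C^1$ closed curve) stay comparable as $t\to-\infty$, ruling out collapsing. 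The natural rescaling is the one fixing area (or length) — but since the flow is ancient and the curve limits on a fixed segment, in fact *no* rescaling is needed: I claim $\Gamma_t$ converges smoothly (after a rotation sending the limiting diameter to the $x$-axis) to $[-1,1]\times\{0\}$. This is the ``unique point at infinity'' phenomenon: the $\omega$-limit at $t=-\infty$ is a static free boundary solution, and the only static convex free boundary curves in the disc are diameters.

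\textbf{Linearization and the $\cosh$ profile.} Once $\Gamma_t\to[-1,1]\times\{0\}$ smoothly, write $\Gamma_t$ as a graph $y=y(x,t)$ over $[-1,1]$, with $y\to 0$. The graphical free boundary csf is $y_t = y_{xx}/(1+y_x^2)$ with the orthogonality boundary condition at $x=\pm1$; linearizing at $y\equiv 0$ gives the heat equation $u_t=u_{xx}$ on $[-1,1]$ with Robin/Neumann-type boundary condition $u_x(\pm1,t)=\mp u(\pm1,t)$ (this is exactly the condition whose Steklov-type eigenvalue equation is $\lambda\tanh\lambda=1$, with first eigenfunction $\cosh(\lambda x)$, even in $x$). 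A linear-parabolic/center-manifold analysis (Angenent's nodal-line arguments, or the Merle–Zaag ODE trichotomy as used in \cite{DHS}) then forces $\mathrm{e}^{\lambda^2 t}y(\cdot,t)$ to converge to $A\cosh(\lambda x)$ for some $A>0$ — the sign $A>0$ because $y$ has fixed sign by convexity (the curve stays on one side of the diameter), and the leading eigenvalue dominates because higher modes decay faster and the projection onto the (unstable-in-backward-time) first mode cannot vanish without the solution being static. This step also pins down the reflection symmetry across the $y$-axis: the first eigenfunction is even, and a standard argument (the reflected flow is again an ancient solution with the same backward asymptotics, so by uniqueness it coincides with the original) upgrades asymptotic evenness to exact evenness.

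\textbf{Existence and uniqueness.} For existence I would construct the solution as a limit of the flows starting from a suitable one-parameter family (e.g. the boundaries of $\epsilon$-neighborhoods of the diameter, or old-but-not-ancient solutions pushed back in time and normalized), extracting a smooth subsequential limit via interior estimates (Gage–Hamilton/Grayson-type curvature bounds adapted to the free boundary, à la Stahl) together with the length/area monotonicity to prevent degeneration; the limit is ancient, convex, and by the argument above has the claimed asymptotics, and it converges to a point $(0,1)$ (up to rotation) as $t\to 0$ by Stahl's theorem. For uniqueness, suppose $\Gamma_t^1,\Gamma_t^2$ are two such flows; after rotating so both have backward-limit the $x$-axis and are even in $x$, the asymptotics give constants $A_1,A_2>0$; replacing one by a time-translate (which rescales $A$ by $\mathrm{e}^{\lambda^2 c}$) we may assume $A_1=A_2$. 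Then $w:=y^1-y^2$ is an ancient solution of a linear parabolic equation (differencing the graphical PDEs) with the Robin boundary condition, and $\mathrm{e}^{\lambda^2 t}w\to 0$; if $w\not\equiv0$ then by Angenent's theory the number of zeros of $w(\cdot,t)$ on $[-1,1]$ is finite, nonincreasing in $t$, and drops whenever there is a multiple zero, while as $t\to-\infty$ the decay $w=o(\mathrm{e}^{-\lambda^2 t})$ forces $w$ to be dominated by a higher eigenmode, hence to have *more* zeros than it can have at any finite time — contradiction unless $w\equiv 0$. Hence $\Gamma^1_t=\Gamma^2_t$.

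\textbf{Main obstacle.} The hard part is the backward convergence \emph{without rescaling} to the diameter: ruling out that the area limit $a_\infty$ is achieved by a degenerate (thin, or crescent-shaped) configuration, and promoting area/length control to full smooth $C^\infty$ convergence. This requires genuinely geometric input — a monotonicity formula or isoperimetric-type estimate for the free boundary problem (in the spirit of \cite{AndrewsBryanisoperimetric}) forcing the asymptotic shape to be a straight chord, plus boundary regularity estimates up to $\partial B^2$ for the flow near a diameter. Once that is in hand, the linearization and the Angenent zero-counting machinery run in the usual way, and the Steklov eigenvalue condition $\lambda\tanh\lambda=1$ falls out of the boundary condition automatically.
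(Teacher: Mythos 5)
Your skeleton (existence $+$ backwards convergence $+$ linearization at the segment $+$ uniqueness) matches the paper's, and you correctly identify the Robin eigenvalue problem $\lambda\tanh\lambda=1$ with mode $\cosh(\lambda x)$. But there are three genuine gaps, and you misplace the main difficulty. The backwards convergence to a diameter, which you flag as the hard part, is handled in the paper in a few lines: integrating the area evolution shows the total turning angle $\int_{\Gamma_t}d\theta\to 0$ as $t\to-\infty$, whence $\overline\Omega_t\to B^2\cap\{y\ge0\}$, and Stahl's estimates upgrade this to smooth convergence. No isoperimetric monotonicity formula is needed.

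The first real gap is $A>0$. Positivity of $y$ only gives $A\ge0$; a priori the solution could decay strictly faster than $\mathrm{e}^{\lambda^2t}$. The paper obtains $0<A<\infty$ first on a \emph{specific} constructed solution (a limit of Stahl flows out of Angenent-oval timeslices $\mathrm{A}^{\lambda_\rho}_{t_\rho}\cap B^2$), via two maximum-principle estimates tailored to that initial data: the lower speed bound $\kappa/\cos\theta\ge\lambda_0\tan(\lambda_0y)$ (giving monotonicity of $\mathrm{e}^{-\lambda_0^2t}\sin(\lambda_0 y)$, hence $A<\infty$) and the upper bound $\kappa/y\le\lambda_0^2+C\mathrm{e}^{2t}$ (giving $A>0$). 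The second gap is closely related: to conclude that the backward limit of the rescaled height is a pure $\cosh$ mode you must rule out the infinitely many positive (backwards-decaying) modes, and the mechanism is an a priori bound $\limsup_{t\to-\infty}\mathrm{e}^{-\lambda_0^2t}\overline y<\infty$ for the \emph{arbitrary} solution, which the paper gets by intersecting it with the constructed one via the avoidance principle. Your appeal to Merle--Zaag/center-manifold machinery does not by itself supply this bound, and without the explicitly constructed comparison solution the whole uniqueness scheme has nothing to anchor to. Third, your derivation of reflection symmetry is circular: you propose to deduce it from uniqueness, but the symmetry (used to get $\kappa_s>0$ in $\{x>0\}$ and the height lower bound $\underline y\ge\sin\overline\theta/(1+\cos\overline\theta)$) is an input to the asymptotics on which uniqueness rests. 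The paper instead proves exact symmetry directly by Alexandrov reflection across lines through the origin, using only the backward convergence to a bisector. Finally, your uniqueness step via Sturmian zero-counting on $w=y^1-y^2$ is a different (and harder to make rigorous near $t=-\infty$, where the linearization is only asymptotic) route than the paper's: the paper simply time-translates one solution so its asymptotic coefficient strictly exceeds the other's, orders the two solutions at $t=-\infty$, propagates the ordering by avoidance, and concludes with the strong maximum principle since both contract to the same boundary point. Minor point: the boundary condition should read $y_x(\pm1)=\pm y(\pm1)$, not $\mp$.
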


Theorem \ref{thm:classification} is a consequence of Propositions \ref{prop:existence}, \ref{prop:linear analysis}, and \ref{prop:uniqueness} proved below. Note that it is actually a classification of all convex ancient solutions, since the strong maximum principle and the Hopf boundary point lemma imply that any convex solution to the flow is either a stationary segment (and hence a bisector of the disc by the free boundary condition) or is locally uniformly convex at interior times.

A higher dimensional counterpart of Theorem \ref{thm:classification} will be treated in a forthcoming paper.

Another natural setting in which to seek ancient solutions is within the class of soliton solutions. Since free boundary curve shortening flow in the disc is invariant under ambient rotations, one might expect to find rotating solutions. In \S \ref{sec:no rotators}, we provide a short proof 
that none exist.

\begin{theorem}\label{thm:no rotators}
There exist no proper rotating solutions to free boundary curve shortening flow in the disc.
\end{theorem}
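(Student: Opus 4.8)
The plan is to derive the ODE satisfied by a rotating solution and show it is incompatible with the free boundary condition, using a conserved quantity (essentially angular momentum / the support function identity). A rotating solution to curve shortening flow in the plane is a curve $\gamma$ that moves by $\partial_t\gamma = \vec\kappa$ while rigidly rotating about the origin at constant angular speed $\omega$, i.e. the normal speed equals the normal component of the rotation field: $\kappa = \omega\,\langle \gamma^\perp, \nu\rangle$, where $\gamma^\perp$ denotes the rotation of the position vector by $\pi/2$. Equivalently, in terms of the turning angle $\theta$ and the support function $h(\theta) = \langle\gamma,\nu\rangle$, one has the well-known rotator equation $\kappa = \omega\,p$, where $p$ is the signed distance from the origin to the tangent line; this is a first-order ODE for the curve once we parametrize by $\theta$. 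First I would write this ODE carefully for a curve in $B^2$ and record that, since the flow is proper (meeting $S^1$ only at the two boundary endpoints), the curve is a compact arc with two endpoints on $S^1$.

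Next I would impose the free boundary condition: at each endpoint $P\in S^1$ the curve meets the circle orthogonally, so the unit normal $\nu$ of $\gamma$ at $P$ is tangent to $S^1$ at $P$, hence $\nu \perp P$ and therefore $\langle\gamma,\nu\rangle = 0$ and $\langle\gamma^\perp,\nu\rangle = \pm 1$ at the endpoints. The key step is to extract a global identity by integrating the rotator equation. Concretely, consider the quantity $Q = \langle\gamma^\perp,\tau\rangle = \langle\gamma, \nu\rangle$ (the support function with respect to the origin); differentiating along the arc-length parameter $s$ and using $\tau' = \kappa\nu$, $\nu' = -\kappa\tau$ together with $\gamma' = \tau$, one finds $\tfrac{d}{ds}\langle\gamma,\nu\rangle = \langle\tau,\nu\rangle - \kappa\langle\gamma,\tau\rangle = -\kappa\langle\gamma,\tau\rangle$, and similarly $\tfrac{d}{ds}\langle\gamma,\tau\rangle = 1 + \kappa\langle\gamma,\nu\rangle$. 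Substituting the rotator relation $\kappa = \omega\langle\gamma^\perp,\nu\rangle = -\omega\langle\gamma,\tau\rangle$ (using $\langle\gamma^\perp,\nu\rangle = -\langle\gamma,\tau\rangle$) turns this into a closed linear system, or more efficiently, I would show directly that $\tfrac{d}{ds}\big(\tfrac12|\gamma|^2\big) = \langle\gamma,\tau\rangle$ and that the rotator equation forces $\tfrac{d}{ds}\langle\gamma,\tau\rangle = 1 - \omega\langle\gamma,\tau\rangle^2$, an autonomous ODE; integrating from one endpoint to the other and using $\langle\gamma,\tau\rangle = 0$ at both endpoints (which follows from orthogonality, since there $\tau\parallel P$ is impossible — rather $\tau\perp P$, giving $\langle\gamma,\tau\rangle=0$) yields $\int 1 - \omega\langle\gamma,\tau\rangle^2\,ds$ evaluated appropriately, forcing either $\omega = 0$ or a contradiction with the sign of $\int\langle\gamma,\tau\rangle^2\,ds > 0$ versus the total arc length $L = \int ds > 0$. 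If $\omega = 0$ the curve is a stationary geodesic, i.e. a diameter, which is not a proper rotating solution in the nontrivial sense.

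The main obstacle I anticipate is getting the boundary terms and signs exactly right: one must be careful that at a free boundary endpoint on $S^1$ it is the \emph{tangent} component $\langle\gamma,\tau\rangle$ that vanishes (because $\gamma = P$ and $\tau\perp P$), not the normal component, and that the curve being convex and properly immersed in $B^2$ forces $|\gamma| < 1$ in the interior with $|\gamma| = 1$ only at the two endpoints — so $\tfrac{d}{ds}(\tfrac12|\gamma|^2) = \langle\gamma,\tau\rangle$ must change sign, hence $\langle\gamma,\tau\rangle$ is not identically zero and $\int\langle\gamma,\tau\rangle^2\,ds>0$ strictly. Combining this with the integrated identity $L = \omega\int_0^L\langle\gamma,\tau\rangle^2\,ds$ (the boundary contributions cancelling by the free boundary condition) shows $\omega > 0$; but then a second integration, or testing against a suitable symmetry, gives the contradiction. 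An alternative route, which may be cleaner and which I would pursue in parallel, is to use the rotator's variational characterization: rotating solutions are geodesics for the conformal metric $\mathrm{e}^{\omega|\gamma|^2/2}\,|d\gamma|^2$, and a Gauss–Bonnet / first-variation argument on the disc with the free boundary (Neumann) condition on $S^1$ obstructs the existence of such a geodesic arc unless $\omega = 0$. I expect the support-function computation above to be the most economical and will present that, keeping the metric viewpoint as a remark.
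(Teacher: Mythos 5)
Your overall strategy---reduce to the ODE system for $\langle\gamma,\tau\rangle$ and $\langle\gamma,\nu\rangle$ along the curve, impose the free boundary condition at the two endpoints, and integrate a monotone quantity to force a contradiction---is exactly the paper's (which follows Halldorsson). But the execution contains two genuine errors that break the argument as written. First, you apply the free boundary condition incorrectly at the decisive step. Orthogonal intersection with $\partial B^2$ means the curve's tangent is parallel to the radius at an endpoint, i.e.\ $\tau=\pm\gamma$ there, so $\langle\gamma,\tau\rangle=\pm1$ and $\langle\gamma,\nu\rangle=0$. You state this correctly at the start of your second paragraph ($\nu\perp P$, $\langle\gamma^{\perp},\nu\rangle=\pm1$), but then assert the opposite (``$\tau\perp P$, giving $\langle\gamma,\tau\rangle=0$'') when you integrate, and repeat the wrong version in your third paragraph; $\tau\perp P$ would mean the curve is \emph{tangent} to the circle, not orthogonal to it. Second, the ``autonomous ODE'' $\tfrac{d}{ds}\langle\gamma,\tau\rangle=1-\omega\langle\gamma,\tau\rangle^{2}$ is a miscomputation: by your own (correct) identities, $\tfrac{d}{ds}\langle\gamma,\tau\rangle=1+\kappa\langle\gamma,\nu\rangle=1-\omega\langle\gamma,\tau\rangle\langle\gamma,\nu\rangle$, which is not autonomous in $\langle\gamma,\tau\rangle$. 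Consequently the identity $L=\omega\int\langle\gamma,\tau\rangle^{2}\,ds$ and the contradiction you draw from it are unjustified.

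The correct monotone quantity is the one you wrote down and then abandoned: $\tfrac{d}{ds}\langle\gamma,\nu\rangle=-\kappa\langle\gamma,\tau\rangle=\omega\langle\gamma,\tau\rangle^{2}$ (up to an overall sign fixed by orientation conventions), so $\langle\gamma,\nu\rangle$ is monotone in arc length. Since the free boundary condition forces $\langle\gamma,\nu\rangle=0$ at \emph{both} endpoints, integrating gives $\omega\int\langle\gamma,\tau\rangle^{2}\,ds=0$; as $\langle\gamma,\tau\rangle=\pm1\neq0$ at the endpoints, this forces $\omega=0$, and a stationary free boundary curve is a bisector, not a proper rotator. With these corrections your proof becomes essentially the paper's, which phrases the same computation as the system $x'=B+xy$, $y'=-x^{2}$ for $x=B\langle\gamma,\tau\rangle$, $y=B\langle\gamma,\nu\rangle$ with boundary data $(x,y)=(\pm B,0)$ and integrates $y'=-x^{2}$ (after first using ODE uniqueness to place the origin on the curve). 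Two further slips worth flagging: the rotator equation is \emph{not} $\kappa=\omega p$ with $p$ the support function $\langle\gamma,\nu\rangle$---it involves the tangential component $\langle\gamma,\tau\rangle$---and convexity of the curve is not a hypothesis of this theorem, so your appeal to it in the third paragraph should be removed.
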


\subsection*{Acknowledgements}

We wish to thank Jonathan Zhu for sharing his thoughts on the problem.

TB was supported through grant 707699 of the Simons Foundation and grant DMS-2105026 of the National Science Foundation. ML was supported through an Australian Research Council DECRA fellowship (grant DE200101834).

\section{Existence}\label{sec:existence}

Our first goal is the explicit construction of a non-trivial ancient free boundary curve shortening flow in the disc. It will be clear from the construction that the solution is reflection symmetric about the vertical axis, emerges at time negative infinity from the horizontal bisector, and converges at time zero to the point $(0,1)$. 
We shall also prove an estimate for the height of the constructed solution (which will be needed to prove its uniqueness).

\subsection{Barriers}


Given $\theta\in (0,\frac{\pi}{2})$, denote by $\mathrm{C}_\theta$ the circle centred on the $y$-axis which meets $\pd B^2$ orthogonally at $(\cos\theta,\sin\theta)$. That is,
\begin{equation}\label{eq:critical circle}
\mathrm{C}_\theta\doteqdot \left\{(x,y)\in \R^2:x^2+\left(\csc\theta-y\right)^2=\cot^2\theta\right\}\,.
\end{equation}
%
If we set
\[
\theta^-(t)\doteqdot \arcsin \mathrm{e}^t\;\;\text{and}\;\; \theta^+(t)\doteqdot \arcsin \mathrm{e}^{2t}\,,
\]
then we find that the inward normal speed of $\mathrm{C}_{\theta^-(t)}$ is no greater than its curvature $\kappa^-$ and the inward normal speed of $\mathrm{C}_{\theta^+(t)}$ is no less than its curvature $\kappa^+$. The maximum principle and the Hopf boundary point lemma then imply that 
\begin{proposition}\label{prop:circle barriers}
a solution to free boundary curve shortening flow in $B^2$ which lies below (resp. above) the circle $\mathrm{C}_{\theta_0}$ at time $t_0$ lies below $\mathrm{C}_{\theta^+(t^+_0+t-t_0)}$ (resp. above $\mathrm{C}_{\theta^-(t^-_0+t-t_0)}$) for all $t>t_0$, where $2t^+_0=\log\sin\theta_0$ (resp. $t^-_0=\log\sin\theta_0$).
\end{proposition}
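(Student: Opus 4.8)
The plan is to run a standard avoidance-principle argument, using the curves $\mathrm{C}_{\theta^\pm(t)}$ as time-dependent barriers. The content of the proposition is essentially just that these circles, with the indicated time parametrizations, are sub/supersolutions to free boundary curve shortening flow — a fact recorded in the commented-out computation preceding the statement — together with the fact that the comparison (maximum) principle and the Hopf lemma apply to the free boundary problem. Since Stahl's work guarantees a comparison principle for free boundary curve shortening flow with free boundary on a fixed barrier (here $\partial B^2$), and the barrier circles $\mathrm{C}_\theta$ also meet $\partial B^2$ orthogonally, the two families are genuinely comparable as free boundary flows in the disc.

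First I would fix notation: write $\kappa$ for the curvature of the evolving solution and $V$ for its inward normal speed, so that the solution satisfies $V=\kappa$, while the barrier $\mathrm{C}_{\theta^+(t)}$ satisfies $V\ge\kappa$ (it is a supersolution: it moves inward faster than curvature flow would dictate) and $\mathrm{C}_{\theta^-(t)}$ satisfies $V\le\kappa$ (a subsolution). The time-shift bookkeeping is chosen precisely so that the barrier and the solution agree in position at the initial time: if the solution lies below $\mathrm{C}_{\theta_0}$ at $t_0$, then choosing $2t_0^+=\log\sin\theta_0$ gives $\theta^+(t_0^+)=\theta_0$, so $\mathrm{C}_{\theta^+(t_0^++t-t_0)}$ passes through (or above) the solution at $t=t_0$ and thereafter shrinks at least as fast; the lower barrier case is symmetric with $t_0^-=\log\sin\theta_0$ since $\theta^-(t_0^-)=\theta_0$.

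Next I would set $f(t)\doteqdot\max$ (signed vertical distance, in an appropriate sense, of the solution above the barrier $\mathrm{C}_{\theta^+(t_0^++t-t_0)}$) — or, more robustly, argue directly: suppose for contradiction that the solution touches the upper barrier from below at some first time $t_1>t_0$. At an interior point of first contact the strong maximum principle for the (scalar, graphical or arclength-parametrized) difference of the two flows forces the two curves to coincide, contradicting that the barrier is a strict supersolution where curvature is positive; at a boundary point of first contact one applies the Hopf boundary point lemma, using that both curves meet $\partial B^2$ orthogonally, to again derive a contradiction. Hence no interior or boundary touching occurs, and the solution remains strictly below the upper barrier for all $t>t_0$; the lower barrier argument is identical with inequalities reversed.

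The main obstacle is a technical one: making the comparison rigorous requires a common parametrization in which the difference of the two flows satisfies a scalar parabolic inequality to which the strong maximum principle and Hopf lemma apply — e.g.\ writing both curves locally as graphs over a common axis, or comparing via the signed distance function — and checking that the free boundary (Neumann-type) condition translates correctly into an oblique boundary condition for that scalar quantity. This is exactly the framework set up by Stahl, so I would invoke it; the only genuinely new input is the elementary ODE computation (carried out in the suppressed lines above) verifying that $\theta^\pm(t)=\arcsin\mathrm{e}^{t},\arcsin\mathrm{e}^{2t}$ render $\mathrm{C}_{\theta^\pm(t)}$ sub/supersolutions, which rests only on the pointwise bounds $\frac{1-\cos\theta}{\sin\theta}\le y\le\sin\theta$ along $\mathrm{C}_\theta$.
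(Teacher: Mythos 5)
Your proposal is correct and follows essentially the same route as the paper: the paper likewise records (in a suppressed computation) that $\mathrm{C}_{\theta^-(t)}$ and $\mathrm{C}_{\theta^+(t)}$ are respectively sub- and supersolutions via the bounds $\frac{1-\cos\theta}{\sin\theta}\le y\le\sin\theta$, notes that the time shifts are chosen so $\theta^\pm(t_0^\pm)=\theta_0$, and then simply invokes the maximum principle together with the Hopf boundary point lemma exactly as you do. Your first-touching-time argument at interior and boundary points is the intended content of that invocation.
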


Consider now the shifted scaled Angenent oval $\{\mathrm{A}^\lambda_t\}_{t\in(-\infty,0)}$, where
\[
\mathrm{A}^\lambda_t\doteqdot \left\{(x,y)\in\R\times(0,\tfrac{\pi}{2\lambda}):
\sin(\lambda y)=\mathrm{e}^{\lambda^2t}\cosh\big(\lambda x\big)\right\}\,.
\]
This evolves by curve shortening flow and satisfies
\[
\nu_\lambda(\cos\theta,\sin\theta)\cdot (\cos\theta,\sin\theta)=\frac{\cos\theta\tanh(\lambda\cos\theta)-\sin\theta\cot(\lambda \sin\theta))}{\sqrt{\tanh^2(\lambda\cos\theta)+\cot^2(\lambda\sin\theta))}}
\]
at a point $(\cos\theta,\sin\theta)\in \pd B^2$.

\begin{lemma}\label{lem:Angenent BC}
For each $\theta\in(0,\frac{\pi}{2})$, there is a unique $\lambda(\theta)\in (0,\frac{\pi}{2\sin\theta})$ such that
\[
\nu_{\lambda(\theta)}(\cos\theta,\sin\theta)\cdot (\cos\theta,\sin\theta)=0\,.
\]
Given $\theta,\theta_0\in(0,\frac{\pi}{2})$ with $\theta>\theta_0$,
\[
\nu_{\lambda(\theta_0)}(\cos\theta,\sin\theta)\cdot (\cos\theta,\sin\theta)<0\,.
\]
\end{lemma}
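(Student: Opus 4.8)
The statement is entirely about the sign of the numerator
\[
h(\lambda,\theta)\doteqdot\cos\theta\tanh(\lambda\cos\theta)-\sin\theta\cot(\lambda\sin\theta),
\]
since the denominator $\sqrt{\tanh^2(\lambda\cos\theta)+\cot^2(\lambda\sin\theta)}$ is positive on the domain $\{\theta\in(0,\tfrac\pi2),\ \lambda\in(0,\tfrac{\pi}{2\sin\theta})\}$ — this range of $\lambda$ being precisely the one for which $(\cos\theta,\sin\theta)$ lies on $\mathrm{A}^\lambda_t$ for some $t$. So I would work throughout with $h$.

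\emph{Existence and uniqueness of $\lambda(\theta)$.} Fix $\theta$ and consider $\lambda\mapsto h(\lambda,\theta)$ on $(0,\tfrac{\pi}{2\sin\theta})$. One computes $\partial_\lambda h=\cos^2\theta\,\sech^2(\lambda\cos\theta)+\sin^2\theta\,\csc^2(\lambda\sin\theta)>0$, so $h(\cdot,\theta)$ is strictly increasing; moreover $h(\lambda,\theta)\to-\infty$ as $\lambda\to0^+$ (since $\sin\theta\cot(\lambda\sin\theta)\sim\lambda^{-1}$) while $h(\lambda,\theta)\to\cos\theta\tanh\!\big(\tfrac\pi2\cot\theta\big)>0$ as $\lambda\uparrow\tfrac{\pi}{2\sin\theta}$. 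Hence $h(\cdot,\theta)$ has a unique zero $\lambda(\theta)$, and $h(\lambda,\theta)<0$ iff $\lambda<\lambda(\theta)$.

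\emph{The comparison inequality.} I would deduce it from the claim that $\theta\mapsto\lambda(\theta)$ is strictly increasing: for then $\theta>\theta_0$ gives $\lambda(\theta_0)<\lambda(\theta)<\tfrac{\pi}{2\sin\theta}$, so $\lambda(\theta_0)$ lies in the domain of $h(\cdot,\theta)$ and $h(\lambda(\theta_0),\theta)<h(\lambda(\theta),\theta)=0$, whence $\nu_{\lambda(\theta_0)}(\cos\theta,\sin\theta)\cdot(\cos\theta,\sin\theta)<0$. To prove monotonicity, note $\lambda(\cdot)$ is $C^1$ by the implicit function theorem (legitimate as $\partial_\lambda h>0$), with $\lambda'=-\partial_\theta h/\partial_\lambda h$ evaluated on the zero set; so it suffices to show $\partial_\theta h<0$ there. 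Expanding by the product rule and using $\csc^2=1+\cot^2$ and $\sech^2=1-\tanh^2$,
\[
\partial_\theta h=-\big[\sin\theta\tanh(\lambda\cos\theta)+\cos\theta\cot(\lambda\sin\theta)\big]+\lambda\sin\theta\cos\theta\big[\cot^2(\lambda\sin\theta)+\tanh^2(\lambda\cos\theta)\big].
\]
On the zero set, $\cos\theta\tanh(\lambda\cos\theta)=\sin\theta\cot(\lambda\sin\theta)=:P>0$; substituting $\tanh(\lambda\cos\theta)=P/\cos\theta$ and $\cot(\lambda\sin\theta)=P/\sin\theta$ collapses both brackets and yields $\partial_\theta h=\tfrac{P}{\sin\theta\cos\theta}(\lambda P-1)$. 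Finally $\lambda P=(\lambda\sin\theta)\cot(\lambda\sin\theta)<1$, because $z\cot z<1$ for $z\in(0,\tfrac\pi2)$ (equivalently $\tan z>z$ there), so $\partial_\theta h<0$ and $\lambda'(\theta)>0$.

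The routine ingredients are the two one-sided limits of $h(\cdot,\theta)$ and the positivity of $\partial_\lambda h$. The one subtle point is $\partial_\theta h$, which is \emph{not} manifestly signed (its two summands have opposite signs); the key is that on the zero set the orthogonality relation $\cos\theta\tanh(\lambda\cos\theta)=\sin\theta\cot(\lambda\sin\theta)$ reduces $-\partial_\theta h>0$ to the elementary inequality $z\cot z<1$. A last bit of bookkeeping — that $\lambda(\theta_0)$ stays inside $(0,\tfrac{\pi}{2\sin\theta})$ in the comparison step — is handled by the monotonicity of $\lambda$ together with $\lambda(\theta)<\tfrac{\pi}{2\sin\theta}$.
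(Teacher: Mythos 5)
Your proof is correct and follows essentially the same route as the paper's: the same function $f(\lambda,\theta)=\cos\theta\tanh(\lambda\cos\theta)-\sin\theta\cot(\lambda\sin\theta)$, the same one-sided limits and positivity of $\partial_\lambda f$ for existence and uniqueness, and the same computation of $\partial_\theta f$ on the zero set, collapsed via the orthogonality relation and reduced to the elementary inequality $z\cot z<1$. The only difference is cosmetic: you package the second claim through the strict monotonicity $\lambda'(\theta)>0$ obtained from the implicit function theorem and then use monotonicity of $f(\cdot,\theta)$, whereas the paper leaves the final deduction from $\partial_\theta f<0$ on the zero set implicit; your version is, if anything, slightly more complete in that it explicitly checks $\lambda(\theta_0)$ stays in the domain $(0,\tfrac{\pi}{2\sin\theta})$.
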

\begin{proof}
Define
\[
f(\lambda,\theta)\doteqdot \cos\theta\tanh(\lambda\cos\theta)-\sin\theta\cot(\lambda \sin\theta))\,.
\]
Observe that
\[
\lim_{\lambda\searrow 0}f(\lambda,\theta)=-\infty\,,\;\; \lim_{\lambda\nearrow\frac{\pi}{2\sin\theta}}f(\lambda,\theta)=\cos\theta\tanh(\tfrac{\pi}{2}\cot\theta)>0\,,
\]
and 
\begin{equation}\label{eq:f lambda monotonicity}
\frac{\pd f}{\pd\lambda}=\cos^2\theta(1-\tanh^2(\lambda\cos\theta))+\sin^2\theta(1+\cot^2(\lambda\sin\theta))>0\,.
\end{equation}
The first claim follows.

Next observe that
\bann
\frac{\pd f}{\pd\theta}={}&-\sin\theta\tanh(\lambda\cos\theta)-\lambda\cos\theta\sin\theta\sech^2(\lambda\cos\theta)\\
{}&-\cos\theta\cot(\lambda\sin\theta)+\lambda\sin\theta\cos\theta\csc^2(\lambda\sin\theta)\,.
\eann
Given $\theta\in(0,\frac{\pi}{2})$, we obtain, at the unique zero $\lambda\in(0,\frac{\pi}{2\sin\theta})$ of $f(\cdot,\theta)$,
\bann
\frac{\pd f}{\pd\theta}={}&-\sin\theta\tan\theta\cot(\lambda\sin\theta)-\lambda\cos\theta\sin\theta(1-\tan^2\theta\cot^2(\lambda\sin\theta))\\
{}&-\cos\theta\cot(\lambda\sin\theta)+\lambda\sin\theta\cos\theta\csc^2(\lambda\sin\theta)\\
={}&-\sec\theta\cot(\lambda\sin\theta)\left(1-\lambda\sin\theta\cot(\lambda\sin\theta)\right).
\eann
Since $Y\cot Y<1$ for $Y\in(0,\frac{\pi}{2})$, this is less than zero. The second claim follows.
\end{proof}

The maximum principle and the Hopf boundary point lemma now imply the following.

\begin{proposition}\label{prop:Angenent oval barriers}
Let $\{\Gamma_t\}_{t\in [\alpha,\omega)}$ be a solution to free boundary curve shortening flow in $B^2$. Suppose that $\lambda\le\lambda(\theta_\alpha)$, where $\theta_\alpha$ denotes the smaller, in absolute value, of the two turning angles to $\Gamma_\alpha$ at its boundary. 
If $\Gamma_\alpha$ lies above $\mathrm{A}^{\lambda}_{s}$, then $\Gamma_t$ lies above $\mathrm{A}^{\lambda}_{s+t-\alpha}$ for all $t\in(\alpha,\omega)\cap(-\infty,\alpha-s)$.
\end{proposition}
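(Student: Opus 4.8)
The plan is to deduce Proposition \ref{prop:Angenent oval barriers} from the avoidance principle for free boundary curve shortening flow, applied to the pair consisting of $\{\Gamma_t\}$ and the family of curves $\{\mathrm{A}^\lambda_{s+t-\alpha}\}$. The Angenent oval pieces $\mathrm{A}^\lambda_\tau$ are genuine solutions of curve shortening flow in the plane (this is recorded in the excerpt), so they certainly solve the flow in $B^2$; the only issue is that they do not meet $\pd B^2$ orthogonally, so they are not themselves free boundary solutions. Hence the comparison must be run carefully near the boundary circle, and the role of Lemma \ref{lem:Angenent BC} is precisely to supply the correct one-sided inequality there.

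The key steps, in order, are as follows. First, fix $\lambda\le\lambda(\theta_\alpha)$ and $s$ with $\Gamma_\alpha$ above $\mathrm{A}^\lambda_s$, and consider the open time interval $I\doteqdot(\alpha,\omega)\cap(-\infty,\alpha-s)$ on which $\mathrm{A}^\lambda_{s+t-\alpha}$ is defined (the upper endpoint $\alpha-s$ is where the oval collapses). Define the (signed) ``gap'' between the two curves and suppose, for contradiction, that it fails to stay nonnegative; let $t_1\in I$ be the first time at which $\Gamma_{t_1}$ touches $\mathrm{A}^\lambda_{s+t_1-\alpha}$. A touching point $p$ is either an interior point of both curves or a boundary point on $\pd B^2$. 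In the interior case the strong maximum principle, applied to the scalar parabolic equation satisfied by the difference of the two curves written as graphs over a common tangent line near $p$, forces the curves to coincide on a neighborhood and then, by a standard continuation/connectedness argument, everywhere on the overlap — which is impossible since $\Gamma_{t}$ is a compact free boundary curve while $\mathrm{A}^\lambda_\tau$ meets $\pd B^2$ non-orthogonally. In the boundary case, $p=(\cos\theta,\sin\theta)$ for some turning angle $\theta$; since $\Gamma$ stays inside $\overline{B^2}$ and lies above the oval up to time $t_1$, at the touching point the outward normal of the oval must point ``outward'' relative to the constraint, i.e. $\nu_\lambda(p)\cdot p\le 0$. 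But $\Gamma$ has an interior touching from above only if the geometry is compatible with this; Lemma \ref{lem:Angenent BC} tells us that $\nu_{\lambda(\theta_0)}(\cos\theta,\sin\theta)\cdot(\cos\theta,\sin\theta)<0$ whenever $\theta>\theta_0$, and monotonicity of $\lambda\mapsto f(\lambda,\theta)$ (inequality \eqref{eq:f lambda monotonicity}) upgrades this to $\nu_\lambda(\cos\theta,\sin\theta)\cdot(\cos\theta,\sin\theta)<0$ for all $\lambda\le\lambda(\theta_\alpha)\le\lambda(\theta)$ and all relevant $\theta\ge\theta_\alpha$. This strict inequality, combined with the orthogonality of $\Gamma$ at $\pd B^2$, is exactly what the Hopf boundary point lemma needs: the difference of the two curves, viewed as a supersolution near $p$, cannot attain an interior-in-time boundary minimum of $0$ with strictly negative outward derivative, a contradiction. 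Either way, no first touching time exists, so $\Gamma_t$ remains above $\mathrm{A}^\lambda_{s+t-\alpha}$ on all of $I$.

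I would also need to handle the ``initial'' configuration cleanly: if $\Gamma_\alpha$ merely lies weakly above $\mathrm{A}^\lambda_s$, a short-time strong maximum principle argument shows the curves separate strictly for $t$ slightly larger than $\alpha$ (or one perturbs $s$ downward by $\e$, runs the argument, and lets $\e\to 0$), so the contradiction setup above is not vacuous. One should also note that the turning angle $\theta_t$ of $\Gamma$ at its boundary can only increase while $\Gamma_t$ stays above the oval in the relevant sense — or at least stays $\ge\theta_\alpha$ up to the first touching time — so that the hypothesis $\lambda\le\lambda(\theta_\alpha)\le\lambda(\theta_t)$ is preserved; this uses the second statement of Lemma \ref{lem:Angenent BC} together with monotonicity of $\theta\mapsto\lambda(\theta)$ (which itself follows from $\pd f/\pd\theta<0$ at the zero of $f$ and $\pd f/\pd\lambda>0$).

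The main obstacle, and the step deserving the most care, is the boundary analysis: one must set up the scalar comparison for two curves neither of which is a priori a graph over a fixed axis near the touching boundary point, track the sign of the relevant normal/conormal derivatives correctly, and verify that the strict inequality from Lemma \ref{lem:Angenent BC} is oriented so as to genuinely preclude a Hopf-type boundary contact. The interior maximum principle and the reduction to a scalar equation are routine; the bookkeeping of signs at $\pd B^2$ — which is where the specific choice $\lambda\le\lambda(\theta_\alpha)$ is used and where the whole proposition could fail for larger $\lambda$ — is the crux.
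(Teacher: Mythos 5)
Your proposal follows the paper's proof in all essentials: interior contact is excluded by the strong maximum principle, and boundary contact at turning angles $\theta\ge\theta_\alpha$ is excluded by the Hopf boundary point lemma together with the sign condition $f(\lambda,\theta)\le 0$, obtained exactly as in the paper by combining \eqref{eq:f lambda monotonicity} (to pass from $\lambda(\theta_\alpha)$ down to $\lambda$) with the $\pd f/\pd\theta<0$ computation from Lemma \ref{lem:Angenent BC} (to pass from $\theta_\alpha$ up to $\theta$). The one step where your reasoning does not hold up is the claim that a boundary contact can only occur at an angle $\ge\theta_\alpha$: you attribute this to the second statement of Lemma \ref{lem:Angenent BC} and the monotonicity of $\theta\mapsto\lambda(\theta)$, but those facts concern the Angenent oval alone and say nothing about where $\pd\Gamma_t$ sits on $\pd B^2$. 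This case genuinely needs an argument, because for $\vert\theta\vert<\theta_\alpha$ one can have $f(\lambda,\theta)>0$, in which case the first-order comparison at the boundary point goes the wrong way and the Hopf argument fails; the whole proposition would be false if contact at such angles were possible. The paper disposes of it by invoking the monotonicity of the two families: the oval moves monotonically upward, and the boundary turning angle of $\Gamma_t$ is nondecreasing for the (convex) solutions to which the proposition is applied, so $\pd\Gamma_t$ never revisits angles of absolute value less than $\theta_\alpha$. With that substitution your argument closes up and coincides with the paper's; the remaining points you flag (strictifying a weak initial ordering, and the sign bookkeeping in the Hopf lemma, which is indeed where $\lambda\le\lambda(\theta_\alpha)$ is used) are handled correctly in outline.
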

\begin{proof}
By the strong maximum principle, the two families of curves can never develop contact at an interior point. Since the families are monotonic, they cannot develop boundary contact at a boundary point $(\cos\theta,\sin\theta)$ with $\vert\theta\vert\le \theta_\alpha$. On the other hand, since $\lambda\le\lambda(\theta_\alpha)$, \eqref{eq:f lambda monotonicity} implies that
\[
f(\lambda,\theta_\alpha)\le f(\lambda_\alpha,\theta_\alpha)=0\,,
\]
and hence, by the argument of Lemma \ref{lem:Angenent BC},
\[
f(\lambda,\theta)\le 0\;\; \text{for}\;\; \theta\ge \theta_\alpha\,.
\]
So the Hopf boundary point lemma implies that no boundary contact can develop for $\theta\ge \theta_\alpha$ either. 
\end{proof}

\begin{remark}
Since $s\cot s\to 1$ as $s\to 0$, $f(\lambda,\theta)$ is non-negative at $\theta=0$ so long as $\lambda\ge \lambda_0$, where $\lambda_0\tanh\lambda_0=1$. 
\end{remark}

\subsection{Old-but-not-ancient solutions}
For each $\rho>0$, choose a curve $\Gamma^\rho$ in $\overline B{}^2$ with the following properties:
\begin{itemize}
\item $\Gamma^\rho$ meets $\pd B^2$ orthogonally at $(\cos\rho,\sin\rho)$,
\item $\Gamma^\rho$ is reflection symmetric about the $y$-axis,
\item $\Gamma^\rho\cap B^2$ is the relative boundary of a convex region $\Omega^\rho\subset B^2$, and
\item $\kappa^\rho_s>0$ in $B^2\cap\{x>0\}$.
\end{itemize}

For example, we could take $\Gamma^\rho\doteqdot \mathrm{A}^{\lambda_\rho}_{t_\rho}\cap B^2$, where $\lambda_\rho>\lambda_0$ and $t_\rho$ are (uniquely) chosen so that
\[
\cos\rho\tanh(\lambda_\rho\cos\rho)-\sin\rho\cot(\lambda_\rho\sin\rho))=0
\]
and
\[
-t_\rho=\lambda_\rho^{-2}\log\left(\frac{\cosh(\lambda_\rho\cos\rho)}{\sin(\lambda_\rho\sin\rho)}\right)\,.
\]
Observe that the circle $\mathrm{C}_{\theta_\rho}$ defined by
\[
\sin\theta_\rho=\frac{2\sin\rho}{1+\sin^2\rho}
\]
is tangent to the line $y=\sin\rho$, and hence lies above $\Gamma^\rho$.

Work of Stahl \cite{Stahl1,Stahl2} now yields the following \emph{old-but-not-ancient solutions}.

\begin{lemma}\label{eq:old-but-not-ancient}
For each $\rho\in(0,\frac{\pi}{2})$, there exists a smooth solution\footnote{Given by a one parameter family of immersions $X:[-1,1]\times [\alpha_\rho,0)\to \overline B{}^2$ satisfying $X\in C^\infty([-1,1]\times (\alpha_\rho,0))\cap C^{2+\beta,1+\frac{\beta}{2}}([-1,1]\times [\alpha_\rho,0))$ for some $\beta\in(0,1)$.} $\{\Gamma^\rho_t\}_{t\in[\alpha_\rho,0)}$ to curve shortening flow with $\Gamma^\rho_{\alpha_\rho}=\Gamma^\rho$ which satisfies the following properties:
\begin{itemize}
\item $\Gamma^\rho_t$ meets $\pd B^2$ orthogonally for each $t\in(\alpha_\rho,0)$,
\item $\Gamma^\rho_t$ is convex and locally uniformly convex for each $t\in(\alpha_\rho,0)$,
\item $\Gamma^\rho_t$ is reflection symmetric about the $y$-axis for each $t\in(\alpha_\rho,0)$,
\item $\Gamma_t^\rho\to (0,1)$ uniformly as $t\to 0$,
\item $\kappa^\rho_s>0$ in $B^2\cap\{x>0\}$, and
\item $\alpha_\rho<\frac{1}{2}\log\left(\frac{2\sin\rho}{1+\sin^2\rho}\right)\to-\infty$ as $\rho\to 0$.
\end{itemize}
\end{lemma}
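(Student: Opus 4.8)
The plan is to obtain $\{\Gamma^\rho_t\}$ directly from Stahl's short-time existence and convexity-preservation theory \cite{Stahl1,Stahl2}, applied to the initial curve $\Gamma^\rho$, and then to show that the maximal solution cannot persist past time $0$ by trapping it between the shrinking-circle barriers of Proposition \ref{prop:circle barriers}. First I would invoke Stahl's existence result: since $\Gamma^\rho$ is a smooth, convex, locally uniformly convex (in $B^2\cap\{x>0\}$) curve meeting $\partial B^2$ orthogonally, there is a maximal smooth solution $\{\Gamma^\rho_t\}_{t\in[\alpha_\rho,\omega_\rho)}$ with the stated boundary regularity $C^{2+\beta,1+\frac{\beta}{2}}$ up to $t=\alpha_\rho$ and $C^\infty$ thereafter; here I am running time \emph{forwards} from the parameter value $\alpha_\rho$ that I will pin down below. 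Stahl's maximum-principle arguments (the convexity estimates are preserved under the free boundary condition on the disc, which is convex and uniformly convex) give that $\Gamma^\rho_t$ stays convex, and the strong maximum principle plus the Hopf lemma upgrade this to locally uniform convexity with $\kappa^\rho_s>0$ on $B^2\cap\{x>0\}$ for $t>\alpha_\rho$. Reflection symmetry about the $y$-axis is preserved because the flow, the domain, and the initial curve are all symmetric and solutions are unique.

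Next I would fix the time interval. The curve $\Gamma^\rho = \mathrm{A}^{\lambda_\rho}_{t_\rho}\cap B^2$ lies below the circle $\mathrm{C}_{\theta_\rho}$ with $\sin\theta_\rho = \frac{2\sin\rho}{1+\sin^2\rho}$, as observed just before the statement. I set $\alpha_\rho \doteqdot \frac12\log\!\big(\frac{2\sin\rho}{1+\sin^2\rho}\big)$, so that $\mathrm{C}_{\theta_\rho} = \mathrm{C}_{\theta^+(\alpha_\rho)}$ in the notation of Proposition \ref{prop:circle barriers} (the defining relation for $\theta^+$ is $\sin\theta^+(t)=\mathrm{e}^{2t}$). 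Then Proposition \ref{prop:circle barriers}, applied with $t_0=\alpha_\rho$ and the observation that $\Gamma^\rho_{\alpha_\rho}$ lies below $\mathrm{C}_{\theta_\rho}$, shows that $\Gamma^\rho_t$ lies below $\mathrm{C}_{\theta^+(t)}$ for all $t\in(\alpha_\rho,\omega_\rho)$. Since $\theta^+(t)\to 0$ and hence $\mathrm{C}_{\theta^+(t)}$ shrinks to the point $(0,1)$ as $t\to 0^-$, the solution is confined to an arbitrarily small neighbourhood of $(0,1)$ as $t\nearrow 0$; in particular its enclosed area tends to $0$, so by the area-decay identity for curve shortening with free boundary (the enclosed area decreases at rate $\pi - \text{(exterior turning angle defect)}$, which is bounded) the maximal time satisfies $\omega_\rho \le 0$. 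Conversely, convexity and the lower circle barrier (or simply the fact that a convex solution exists as long as it has not shrunk to a point) give $\omega_\rho = 0$, and the confinement forces $\Gamma^\rho_t\to(0,1)$ uniformly as $t\to 0$. Finally, $\alpha_\rho = \frac12\log\big(\frac{2\sin\rho}{1+\sin^2\rho}\big)\to -\infty$ as $\rho\to 0$, which is the last bullet (the strict inequality in the statement is automatic if one instead takes $\alpha_\rho$ slightly smaller, or one simply replaces ``$<$'' by ``$=$'' — either way the divergence to $-\infty$ is what matters).

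The step I expect to require the most care is the passage from Stahl's theory to the \emph{precise} qualitative list of properties, in particular verifying that the hypotheses of Stahl's convexity-preservation and the avoidance principles (Propositions \ref{prop:circle barriers} and \ref{prop:Angenent oval barriers}) are literally satisfied for our initial data with only $C^{2+\beta,1+\frac\beta2}$ regularity up to the initial time. Concretely, the boundary gradient estimate implicit in the free boundary condition is only known to hold for $t>\alpha_\rho$ (the orthogonality at $t=\alpha_\rho$ is imposed on $\Gamma^\rho$ by construction), so all maximum-principle comparisons must be run on $[\alpha_\rho+\varepsilon,\omega_\rho)$ and then $\varepsilon\searrow 0$; this is routine but must be stated. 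A secondary point is the claim $\Gamma^\rho_t\to(0,1)$ \emph{uniformly}: the circle barrier gives one-sided confinement from above, and one needs a matching lower barrier — here the inner shrinking circle $\mathrm{C}_{\theta^-}$ of Proposition \ref{prop:circle barriers}, applied to the largest circle below $\Gamma^\rho$, does the job, squeezing $\Gamma^\rho_t$ between two circles both collapsing to $(0,1)$. Everything else (symmetry, convexity, sign of $\kappa^\rho_s$) is a direct consequence of the maximum principle and Hopf lemma exactly as invoked repeatedly above.
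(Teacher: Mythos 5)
Your first paragraph tracks the paper's proof: Stahl's existence theorem, his convexity preservation, and uniqueness of solutions (for the reflection symmetry) are exactly the ingredients used. One step is under-specified: the positivity of $\kappa^\rho_s$ on $B^2\cap\{x>0\}$ does not follow from ``the strong maximum principle plus the Hopf lemma'' applied to $\kappa$; the paper deduces it from the symmetry (which forces $\kappa^\rho_s=0$ on $\{x=0\}$), Stahl's boundary identity $\kappa^\rho_s=\kappa^\rho>0$ at the free boundary, and Sturm's theorem applied to $\kappa^\rho_s$. (A maximum principle for $\kappa^\rho_s$ itself on the half-curve $\{x\ge0\}$, using its initial positivity there, would also work, but you must say which function the maximum principle is being applied to.)

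The second paragraph contains a genuine error: you have the direction of the circle comparison reversed. In Proposition \ref{prop:circle barriers}, ``$\Gamma_t$ lies below $\mathrm{C}_{\theta^+}$'' means that the enclosed region $\Omega_t$ \emph{contains} the lens cut off by $\mathrm{C}_{\theta^+}$ near $(0,1)$; the outer barrier therefore gives a \emph{lower} bound on the extinction time and does not confine $\Gamma^\rho_t$ to a shrinking neighbourhood of $(0,1)$ (also, $\theta^+(t)\to\tfrac{\pi}{2}$ as $t\to 0^-$, not $0$). Hence your conclusion $\omega_\rho\le 0$ does not follow; with $\alpha_\rho$ prescribed to be $\tfrac12\log\big(\tfrac{2\sin\rho}{1+\sin^2\rho}\big)$, the barrier in fact yields $\omega_\rho\ge 0$, generically strictly, so your family fails the fourth bullet. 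The resolution is that $\alpha_\rho$ cannot be prescribed: Stahl's theorem already yields contraction to a boundary point (which is $(0,1)$ by symmetry) in finite time; one \emph{translates} time so that extinction occurs at $t=0$, and $\alpha_\rho$ is whatever initial time results. The inequality $\alpha_\rho<\tfrac12\log\big(\tfrac{2\sin\rho}{1+\sin^2\rho}\big)$ is then a \emph{consequence} of the barrier: since $\Gamma^\rho_{\alpha_\rho}$ lies below $\mathrm{C}_{\theta_\rho}$, the solution lies below $\mathrm{C}_{\theta^+(t_0^++t-\alpha_\rho)}$ with $2t_0^+=\log\sin\theta_\rho$, and since $\Omega^\rho_t$ contains the corresponding lens it cannot vanish before that circle does, forcing $0\ge\alpha_\rho-t_0^+$ (strictly, by the strong maximum principle). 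Your closing remark that one may ``take $\alpha_\rho$ slightly smaller'' or replace $<$ by $=$ misses this point: $\alpha_\rho$ is fixed by the normalisation of the extinction time, and the last bullet is an estimate to be proved, not a choice to be made.
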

\begin{proof}
Existence of a maximal solution to curve shortening flow out of $\Gamma^\rho$ which meets $\pd B^2$ orthogonally was proved by Stahl \cite[Theorem 2.1]{Stahl1}. Stahl also proved that this solution remains convex and locally uniformly convex and shrinks to a point on the boundary of $B^2$ at the final time (which is finite) \cite[Proposition 1.4]{Stahl2}. We obtain $\{\Gamma_t^\rho\}_{t\in[\alpha_\rho,0)}$ by time-translating Stahl's solution. 


By uniqueness of solutions 
$\Gamma^\rho_t$ remains reflection symmetric about the $y$-axis for $t\in(\alpha_\rho,0)$, so the final point is $(0,1)$.

The reflection symmetry also implies that $\kappa^\rho_s=0$ at the point $p_t\doteqdot \Gamma^\rho_t\cap\{x=0\}$ for all $t\in[\alpha_\rho,0)$. By \cite[Proposition 2.1]{Stahl2}, $\kappa^\rho_s=\kappa^\rho>0$ at the boundary point $q_t\doteqdot \pd\Gamma^\rho_t\cap\{x>0\}$ for all $t\in(\alpha_\rho,0)$. Applying Sturm's theorem \cite{MR953678} to $\kappa^\rho_s$, we thus find that $\kappa^\rho_s>0$ on $\Gamma^\rho_t\cap B^2\cap\{x>0\}$ for all $t\in(\alpha_\rho,0)$.

Since $\mathrm{C}_{\theta_\rho}\subset\Omega^\rho$, the final property follows from Proposition \ref{prop:circle barriers}.
\end{proof}

We now fix $\rho>0$ and drop the super/subscript $\rho$. Set
\[
\underline\kappa(t)\doteqdot \min_{\Gamma_t}\kappa=\kappa(p_t)\;\;\text{and}\;\;\overline\kappa(t)\doteqdot \max_{\Gamma_t}\kappa=\kappa(q_t)\,,
\]
and define $\underline y(t)$, $\overline y(t)$ and $\overline\theta(t)$ by
\[
p_t=(0,\underline y(t))\,,\;\;q_t=(\cos\overline\theta(t),\sin\overline\theta(t))\,,\;\;\text{and}\;\;\overline y(t)=\sin\overline\theta(t)\,.
\]

\begin{lemma}\label{lem:crude estimates}
Each old-but-not-ancient solution satisfies
\begin{equation}\label{eq:kappa bounds}
\underline\kappa\le\tan\overline\theta\le \overline\kappa\,,
\end{equation}
\begin{equation}\label{sin theta upper bound}
\sin\overline\theta\le e^t\,,
\end{equation}
and
\begin{equation}\label{eq:min y lower bound}
\frac{\sin\overline\theta}{1+\cos\overline\theta}\le \underline y\le \sin\overline\theta\,.
\end{equation}
\end{lemma}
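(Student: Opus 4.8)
The plan is to prove the three estimates in turn, exploiting the monotonicity of $\kappa$ along the convex curve (which holds by the last bullet of Lemma \ref{eq:old-but-not-ancient}) together with the representation of the curve by its turning angle. Let $\theta\in[-\overline\theta,\overline\theta]$ denote the turning angle, so that the unit tangent is $(\cos\theta,\sin\theta)$; then the curve can be written as
\[
x(\theta)=\int_0^\theta\frac{\cos u}{\kappa(u)}\,du\,,\qquad y(\theta)=\underline y+\int_0^\theta\frac{\sin u}{\kappa(u)}\,du\,,
\]
where we used $x(0)=0$ (reflection symmetry forces the $x=0$ point to be the one with horizontal tangent) and $y(0)=\underline y$.

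For \eqref{eq:kappa bounds}, I would argue geometrically with the orthogonal circle $\mathrm{C}_{\overline\theta}$, which has curvature $\tan\overline\theta$ and meets $\partial B^2$ orthogonally at $q_t=(\cos\overline\theta,\sin\overline\theta)$. First I claim $\mathrm{C}_{\overline\theta}$ lies locally below $\Gamma_t$ near $q_t$: otherwise $\kappa(q_t)\le\tan\overline\theta$, and translating $\mathrm{C}_{\overline\theta}$ downward until it first touches $\Gamma_t$ from below at an interior point gives a point with $\kappa\ge\tan\overline\theta$, contradicting the strict maximization of $\kappa$ at $q_t$; hence $\overline\kappa=\kappa(q_t)\ge\tan\overline\theta$. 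For the lower bound, either read it off from $x(\overline\theta)=\int_0^{\overline\theta}\cos u/\kappa(u)\,du\ge \sin\overline\theta/\overline\kappa$ combined with... actually more directly: since $\kappa$ is monotone, $x(\overline\theta)\le\int_0^{\overline\theta}\cos u\,du/\underline\kappa=\sin\overline\theta/\underline\kappa$, and since $\cos\overline\theta = x(\overline\theta)$ at the boundary point $q_t$... wait, $x(q_t)=\cos\overline\theta$, so $\cos\overline\theta\le\sin\overline\theta/\underline\kappa$, i.e. $\underline\kappa\le\tan\overline\theta$.

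Estimate \eqref{sin theta upper bound} then follows by integrating the ODE for $\overline\theta$: since $q_t$ slides along $\partial B^2$ with the flow, a standard computation gives $\frac{d}{dt}\overline\theta=\overline\kappa\ge\tan\overline\theta$ (using \eqref{eq:kappa bounds}), hence $\frac{d}{dt}\sin\overline\theta=\cos\overline\theta\,\frac{d\overline\theta}{dt}\ge\sin\overline\theta$; integrating from $t$ to the final time $0$, where $\overline\theta\to\frac{\pi}{2}$ because the solution contracts to $(0,1)$, yields $\sin\overline\theta(t)\le\mathrm{e}^t$. For \eqref{eq:min y lower bound}, the upper bound $\underline y\le\sin\overline\theta=\overline y$ is immediate since $y$ is increasing in $|\theta|$ along the convex curve. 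For the lower bound, I would again use the circle $\mathrm{C}_{\overline\theta}$: having shown it lies locally below $\Gamma_t$ near $q_t$, and using $\kappa_s>0$ on $\{x>0\}$, one shows $\mathrm{C}_{\overline\theta}$ lies globally below $\Gamma_t\cap\{x\ge 0\}$ (if not, slide it down to a first interior contact point $p'\in\{x\ge0\}$ with $\kappa(p')\ge\tan\overline\theta$; monotonicity of $\kappa_s$ then forces $\kappa\ge\tan\overline\theta$ on the whole arc from $p'$ to $q_t$, which in turn forces that arc — including $p'$ — to lie above $\mathrm{C}_{\overline\theta}$, a contradiction). Evaluating at $x=0$ gives $\underline y\ge\csc\overline\theta-\cot\overline\theta=\frac{1-\cos\overline\theta}{\sin\overline\theta}=\frac{\sin\overline\theta}{1+\cos\overline\theta}$.

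The main obstacle is the global barrier claim used in \eqref{eq:min y lower bound}: one must be careful that the first point of contact as $\mathrm{C}_{\overline\theta}$ is translated downward indeed lies in the region $\{x\ge0\}$ where $\kappa_s>0$ is available (this uses the reflection symmetry to restrict attention to the right half), and that the resulting arc-comparison argument correctly converts the curvature inequality $\kappa\ge\tan\overline\theta$ along an arc with common endpoint $q_t$ into a containment statement relative to $\mathrm{C}_{\overline\theta}$. Everything else is a routine ODE integration or a direct consequence of the turning-angle formulae and the monotonicity of $\kappa$.
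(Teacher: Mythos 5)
Your proposal is correct and follows essentially the same route as the paper: the local circle comparison at $q_t$ for $\overline\kappa\ge\tan\overline\theta$, integration of $\frac{d}{dt}\sin\overline\theta=\cos\overline\theta\,\overline\kappa\ge\sin\overline\theta$ for \eqref{sin theta upper bound}, and the sliding-circle argument with $\kappa_s\ge 0$ on $\{x>0\}$ for the lower bound on $\underline y$. The only deviation is that you obtain $\underline\kappa\le\tan\overline\theta$ from the turning-angle integral $x(\overline\theta)=\int_0^{\overline\theta}\cos u/\kappa(u)\,du$ rather than from the paper's comparison with the circle through $p_t$ and $q_t$; this is exactly the alternative the paper itself records in Remark \ref{rmk:Gage's lemma}, so it is equally valid.
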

\begin{proof}
To prove the lower bound for $\overline \k$, it suffices to show that the circle $\mathrm{C}_{\overline \theta(t)}$ (see \eqref{eq:critical circle}) lies locally below $\Gamma_t$ near $q_t$. 
If this is not the case, then, locally around $q_t$, $\Gamma_t$ lies below $\mathrm{C}_{\overline \theta(t)}$ and hence $\kappa(q_t)\le \tan\overline\theta(t)$. But then we can translate $\mathrm{C}_{\overline \theta(t)}$ downwards until it touches $\Gamma_t$ from below in an interior point at which the curvature must satisfy $\k\ge \tan\overline\theta(t)$. This contradicts the unique maximization of the curvature at $q_t$.

The estimate \eqref{sin theta upper bound} now follows by integrating the inequality
\[
\frac{d}{dt}\sin\overline\theta=\cos\overline\theta\,\overline\kappa\ge\sin\overline\theta
\]
between any initial time $t$ and the final time $0$ (at which $\overline\theta=\frac{\pi}{2}$ since the solution contracts to the point $(0,1)$).

The upper bound for $\underline y$ follows from convexity and the boundary condition $\overline y=\sin\overline\theta$. To prove the lower bound, we will show that the circle $\mathrm{C}_{\overline \theta(t)}$ lies nowhere above $\Gamma_t$. Suppose that this is not the case. Then, since $\mathrm{C}_{\overline \theta(t)}$ lies locally below $\Gamma_t$ near $q_t$, we can move $\mathrm{C}_{\overline\theta(t)}$ downwards until it is tangent from below to a point $p'_t$ on $\Gamma_t\cap\{x\ge 0\}$, at which we must have $\k\ge \tan\overline\theta(t)$. But then, since $\kappa_s\ge 0$ in $\{x>0\}$, we find that $\k\ge \tan \overline\theta(t)$ for all points between $p'_t$ and $q_t$. But this implies that this whole arc (including $p'_t$) lies above $\mathrm{C}_{\overline\theta(t)}$, a contradiction. 

To prove the upper bound for $\underline\kappa$, fix $t$ and consider the circle $C$ centred on the $y$-axis through the points $p_t$ and $q_t$. Its radius is $r(t)$, where
\[
r\doteqdot \frac{\cos^2\overline\theta+(\sin\overline\theta-\underline y)^2}{2(\sin\overline\theta-\underline y)}\,.
\]
We claim that $\Gamma_t$ lies locally below $C$ near $p_t$. Suppose that this is not the case. Then, by the symmetry of $\Gamma_t$ and $C$ across the $y$-axis, $\Gamma_t$ lies locally above $C$ near $p_t$. This implies two things: first, that
\[
\kappa(p_t)\ge r^{-1},
\]
and second, that, by moving $C$ vertically upwards, we can find a point $p'_t$ (the final point of contact) which satisfies
\[
\kappa(p'_t)\le r^{-1}\,.
\] 
These two inequalities contradict the (unique) minimization of $\kappa$ at $p_t$. We conclude that
\[
\underline\kappa\le\frac{2(\sin\overline\theta-\underline y)}{\cos^2\overline\theta+(\sin\overline\theta-\underline y)^2}\le \tan\overline\theta
\]
due to the lower bund for $\underline y$.
\end{proof}

\begin{remark}\label{rmk:Gage's lemma}
If we parametrize by turning angle $\theta\in[-\overline\theta,\overline\theta]$, so that
\[
\tau=(\cos\theta,\sin\theta)\,,
\]
then the estimates \eqref{eq:kappa bounds} are also easily obtained from the monotonicity of $\kappa$ and the formulae
\begin{equation}\label{eq:curvedef}
x(\theta)=x_0+\int_0^{\theta} \frac{\cos u}{\kappa(u)}du\;\;\text{and}\;\;y(\theta)=y_0+\int_{0}^{\theta}\frac{\sin u}{\kappa(u)}du\,.
\end{equation}
\end{remark}

\subsection{Taking the limit}

\begin{proposition}\label{prop:existence}
There exists a convex, locally uniformly convex ancient curve shortening flow in the disc with free boundary on the circle.
\end{proposition}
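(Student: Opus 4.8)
The strategy is to extract the desired ancient solution as a subsequential limit of the old-but-not-ancient solutions $\{\Gamma^\rho_t\}$ from Lemma \ref{eq:old-but-not-ancient} as $\rho\searrow 0$. Each of these is defined on $[\alpha_\rho,0)$ with $\alpha_\rho\to-\infty$, so after passing to a subsequence one should obtain a flow defined on all of $(-\infty,0)$. To make the limit non-trivial, I would first \emph{normalize} the solutions. One natural normalization is to translate in time so that a fixed geometric quantity — say the maximal height $\underline\kappa^{-1}$, or equivalently the value of $\overline\theta$, or perhaps the enclosed area — takes a prescribed value at time $0$, or at some fixed reference time. Actually, since each $\Gamma^\rho_t$ already converges to $(0,1)$ as $t\to0$, the solutions are already anchored at the top; what needs to be controlled is that they do not all degenerate to the single point $(0,1)$ in the limit. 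The estimate $\alpha_\rho<\tfrac12\log\big(\tfrac{2\sin\rho}{1+\sin^2\rho}\big)\to-\infty$, together with the lower barrier provided by the Angenent ovals $\mathrm{A}^{\lambda_0}_s$ (Proposition \ref{prop:Angenent oval barriers} and the Remark, which shows $\lambda_0$ with $\lambda_0\tanh\lambda_0=1$ is an admissible barrier parameter down to $\theta=0$), should guarantee that on any fixed compact time interval $[-T,0)$ the curves $\Gamma^\rho_t$ for $\rho$ small are trapped between a fixed Angenent oval from below and a fixed circle $\mathrm{C}_{\theta^+}$ from above, hence have uniformly bounded, uniformly non-degenerate geometry.

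With uniform barriers in hand, the second step is to obtain uniform curvature bounds and higher-order estimates on compact subsets of space-time bounded away from $t=0$. Interior estimates for curve shortening flow (Ecker--Huisken-type, or simply Bernstein-type estimates for the graph equation, valid here because convexity lets us write the curves locally as graphs with controlled gradient away from the two ``vertical'' boundary points, and near the boundary one uses Stahl's boundary regularity theory \cite{Stahl1,Stahl2}) give $C^\infty_{\mathrm{loc}}$ bounds on $(-\infty,0)$. Then Arzel\`a--Ascoli and a diagonal argument over an exhaustion of $(-\infty,0)$ by compact intervals produce a subsequence $\rho_j\to0$ along which $\Gamma^{\rho_j}_t\to\Gamma^\infty_t$ in $C^\infty_{\mathrm{loc}}$, and the limit $\{\Gamma^\infty_t\}_{t\in(-\infty,0)}$ is a smooth solution to free boundary curve shortening flow in $B^2$. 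Convexity passes to the limit (being a closed condition), and local uniform convexity of the limit follows from the strong maximum principle applied to $\kappa$ (a convex limit flow is either a stationary bisector or strictly convex at interior times, as noted in the Introduction after Theorem \ref{thm:classification}) — one rules out the stationary segment because, e.g., the Angenent-oval lower barrier forces $\underline\kappa>0$, or because $\overline\theta(t)\to\tfrac\pi2$ is inherited while a bisector has $\overline\theta\equiv\tfrac\pi2$ and zero curvature, contradicting \eqref{eq:kappa bounds} unless the curve is genuinely curved. The reflection symmetry about the $y$-axis also passes to the limit.

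The main obstacle I anticipate is \textbf{ensuring the limit is non-degenerate and genuinely ancient}, i.e.\ that the diagonal subsequence does not simply converge to the point $(0,1)$ (which would happen if all the ``action'' of $\Gamma^\rho$ were concentrated in a time-interval of length shrinking to $0$ as $\rho\to0$). This is exactly what the lower barriers are for: since $\Gamma^\rho$ contains the boundary point $(\cos\rho,\sin\rho)$ and Proposition \ref{prop:Angenent oval barriers} lets one place a \emph{fixed} Angenent oval $\mathrm{A}^{\lambda_0}_{s}$ below $\Gamma^\rho_t$ from some time onward once the turning angle is small enough, the solution cannot vacate a fixed neighbourhood of $[-1,1]\times\{0\}$ until a time bounded away from $0$ \emph{uniformly in $\rho$}; combined with $\alpha_\rho\to-\infty$ this pins down a non-trivial limit on all of $(-\infty,0)$. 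A secondary technical point is the boundary regularity of the limit — one must check the free boundary (orthogonality) condition is preserved, which follows from $C^\infty_{\mathrm{loc}}$ convergence up to the boundary using Stahl's estimates, and that the solution remains \emph{proper} (meets $\pd B^2$ only at its two endpoints), which again follows from the barriers. The convergence as $t\to-\infty$ to the bisector and the fine asymptotics are not needed here — they are the content of Proposition \ref{prop:linear analysis} — so for this Proposition it suffices to produce \emph{some} such ancient flow.
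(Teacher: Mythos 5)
Your overall strategy---extract the ancient solution as a $C^\infty_{\mathrm{loc}}$ limit of the old-but-not-ancient solutions as $\rho\searrow 0$, using Stahl's estimates for compactness and barriers for non-degeneracy---is the paper's. The gap is in how you propose to get the $\rho$-independent pinching of $\Gamma^\rho_t$ at a fixed time $t<0$. You anchor your barriers at the initial time: a circle above $\Gamma^\rho$ and an Angenent oval $\mathrm{A}^{\lambda_0}_{s_\rho}$ below it at time $\alpha_\rho$. The lower barrier at time $t$ is then the time-$\bigl(s_\rho+(t-\alpha_\rho)\bigr)$ oval, whose height is governed by the offset $s_\rho-\alpha_\rho$, and this offset is \emph{not} controlled by the available estimates: the latest oval fitting under $\Gamma^\rho$ has $s_\rho\approx\lambda_0^{-2}\log\rho$, while Lemma \ref{eq:old-but-not-ancient} gives only the one-sided bound $\alpha_\rho<\tfrac12\log\sin\theta_\rho\approx\tfrac12\log(2\rho)$; since $\lambda_0^{-2}>\tfrac12$, the offset may tend to $-\infty$, in which case your lower barrier at time $t$ degenerates to the segment and does not exclude that the limit flow is the stationary bisector. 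Two related slips: an oval lying \emph{below} the solution pushes it upward, so it cannot, as you assert, prevent the solution from ``vacating a neighbourhood of $[-1,1]\times\{0\}$'' (that is the job of the upper barrier); and the horizontal bisector has $\overline\theta=0$, not $\tfrac{\pi}{2}$, so your alternative route to excluding the stationary segment via \eqref{eq:kappa bounds} also does not work as stated.

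The paper avoids all of this by anchoring the estimates at the common extinction datum rather than at $\alpha_\rho$: every $\Gamma^\rho_t$ contracts to $(0,1)$ at time $0$, so integrating $\frac{d}{dt}\sin\overline\theta\ge\sin\overline\theta$ forward to time $0$ gives the uniform upper bound $\sin\overline\theta(t)\le \mathrm{e}^t$ of \eqref{sin theta upper bound} (which is also what supplies the uniform gradient bound your compactness step needs), while the contrapositive of Proposition \ref{prop:circle barriers} shows that $\Gamma^\rho_t$ must meet the \emph{fixed} lens enclosed by $\mathrm{C}_{\theta^+(t)}$ for every $t<0$ (otherwise it could not reach $(0,1)$ at time $0$); this is the uniform lower bound that rules out collapse to the segment. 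Your oval argument can be repaired in the same spirit---slide $\mathrm{A}^{\lambda_0}_{s}$ under $\Gamma^\rho_t$ at time $t$ itself (admissible since $\lambda_0\le\lambda(\theta)$ for all $\theta$) and observe that the oval must not exit the disc before the extinction time $0$---but as written the non-degeneracy step does not close.
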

\begin{proof}
For each $\rho>0$, consider the old-but-not-ancient solution $\{\Gamma^\rho_t\}_{t\in[\alpha_\rho,0)}$, $\Gamma^\rho_t=\pd \Omega^\rho_t$, constructed in Lemma \ref{eq:old-but-not-ancient}. By \eqref{sin theta upper bound}, $\Omega^\rho_t$ contains $\mathrm{C}_{\omega(t)}\cap B^2$, where $\omega(t)\in(0,\frac{\pi}{2})$ is uniquely defined by
\[
\frac{1-\cos\omega(t)}{\sin\omega(t)}=\mathrm{e}^t\,.
\]
If we represent $\Gamma^\rho_t$ as a graph $x\mapsto y^\rho(x,t)$ over the $x$-axis, then convexity and the boundary condition imply that $\vert y^\rho_x\vert\le \tan\omega$. Since $\omega(t)$ is independent of $\rho$, Stahl's (global in space, interior in time) Ecker--Huisken type estimates \cite{Stahl1} imply uniform-in-$\rho$ bounds for the curvature and its derivatives. So the limit
\[
\{\Gamma^\rho_t\}_{t\in[\alpha_\rho,0)}\to \{\Gamma_t\}_{t\in(-\infty,0)}
\]
exists in $C^\infty$ (globally in space on compact subsets of time) and the limit $\{\Gamma_t\}_{t\in(-\infty,0)}$ satisfies curve shortening flow with free boundary in $B^2$. On the other hand, since  $\{\Gamma^\rho_t\}_{t\in(\alpha_\rho,0)}$ contracts to $(0,1)$ as $t\to 0$, (the contrapositive of) Proposition \ref{prop:circle barriers} implies that $\Gamma^\rho_t$ must intersect the closed region enclosed by $\mathrm{C}_{\theta^+(t)}$ for all $t<0$. It follows that $\Gamma_t$ must intersect the closed region enclosed by $\mathrm{C}_{\theta^+(t)}$ for all $t<0$. Since each $\Gamma_t$ is the limit of convex boundaries
, each is convex
. It follows that $\Gamma_t$ converges to $(0,1)$ as $t\to 0$ and, by \cite[Corollary 4.5]{Stahl1}, that $\Gamma_t$ is locally uniformly convex for each $t$.
\end{proof}

\subsection{Asymptotics for the height}

For the purposes of this section, we fix an ancient solution $\{\Gamma_t\}_{(-\infty,0)}$ obtained as in Proposition \ref{prop:existence} by taking a sublimit as $\lambda\searrow \lambda_0$ of the specific old-but-not ancient solutions $\{\Gamma^\lambda_t\}_{t\in[\alpha_\lambda,0)}$ corresponding to $\Gamma^\lambda_{\alpha_\lambda}=\mathrm{A}^{\lambda}_{t_\lambda}\cap B^2$, $t_\lambda$ being the time at which $\{\mathrm{A}^\lambda_t\}_{t\in(-\infty,0)}$ meets $\pd B^2$ orthogonally. The asymptotics we obtain for this solution will be used to prove its uniqueness. 

We will need to prove that the limit $\lim_{t\to-\infty}\mathrm{e}^{-\lambda_0^2t}\underline y(t)$ exists in $(0,\infty)$. The following speed bound will imply that it exists in $[0,\infty)$. 

\begin{lemma} \label{lem:lower curvature estimate}
The ancient solution $\{\Gamma_t\}_{(-\infty,0)}$ satisfies
\begin{equation}\label{eq:curvature lower}
\frac{\kappa}{\cos\theta}\ge \lambda_0\tan(\lambda_0y)\,.
\end{equation}
\end{lemma}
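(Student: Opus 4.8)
The plan is to exploit the fact that the quantity $\kappa/\cos\theta - \lambda_0\tan(\lambda_0 y)$ vanishes identically on the Angenent ovals $\mathrm{A}^{\lambda_0}_s$ (which are precisely the curves realizing equality, since they solve $\sin(\lambda_0 y)=\mathrm{e}^{\lambda_0^2 s}\cosh(\lambda_0 x)$ and one computes directly that $\kappa = \lambda_0\cos\theta\tan(\lambda_0 y)$ along them), and then run a maximum principle / barrier argument using the old-but-not-ancient approximators $\{\Gamma^\lambda_t\}$. Concretely, I would first verify by a direct computation that along any curve of the form $\sin(\lambda y)=\mathrm{e}^{\lambda^2 t}\cosh(\lambda x)$ evolving by curve shortening flow, the identity $\kappa = \lambda\cos\theta\tan(\lambda y)$ holds (here $\theta$ is the turning angle with $\tau=(\cos\theta,\sin\theta)$), so that the function $W \doteqdot \kappa - \lambda_0\cos\theta\tan(\lambda_0 y)$ is $\equiv 0$ on $\mathrm{A}^{\lambda_0}_s$; the claimed inequality \eqref{eq:curvature lower} is exactly $W\ge 0$.

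The second step is to show $W\ge 0$ on each approximator $\Gamma^\lambda_t$ for $\lambda>\lambda_0$, and then pass to the limit $\lambda\searrow\lambda_0$ (the convergence is in $C^\infty$ on compact subsets of space-time by Proposition \ref{prop:existence}, so the inequality survives). For the approximator, the initial curve $\Gamma^\lambda_{\alpha_\lambda}=\mathrm{A}^\lambda_{t_\lambda}\cap B^2$ satisfies $\kappa = \lambda\cos\theta\tan(\lambda y)$; since $\lambda>\lambda_0$ and $s\mapsto s\tan s$ is increasing on $(0,\tfrac\pi2)$ while $s\mapsto\tan s/s$ is also increasing there, one checks $\lambda\cos\theta\tan(\lambda y)\ge \lambda_0\cos\theta\tan(\lambda_0 y)$ pointwise (the relevant arguments $\lambda y$, $\lambda_0 y$ lie in $(0,\tfrac\pi2)$ because $y<\tfrac{\pi}{2\lambda}$), so $W\ge 0$ initially. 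I would then derive the evolution equation for $W$ along the flow. Using the standard identities $\partial_t\kappa = \kappa_{ss}+\kappa^3$, $\partial_t\theta = \kappa_s$ (with $\partial_s = \kappa\,\partial_\theta$ after reparametrizing by turning angle), $y_s=\sin\theta$, $x_s=\cos\theta$, one computes that $G(\theta,y)\doteqdot \lambda_0\cos\theta\tan(\lambda_0 y)$ satisfies a reaction–diffusion inequality of the form $\partial_t G \le G_{ss} + \kappa^2 G + (\text{terms})$ chosen so that $W$ obeys $\partial_t W \ge W_{ss} + (\text{lower-order linear in } W \text{ and } W_s)$ — this works precisely because $G$ is built from the Angenent-oval ansatz, which is itself a solution, so the "error" in the reaction term has a favorable sign.

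The third step handles the boundary. At the free boundary point $q_t=(\cos\overline\theta,\sin\overline\theta)$, the orthogonality condition combined with Lemma in the commented-out section (or rather with $\langle\nabla\kappa,F\rangle = \kappa$ at the boundary — the identity $\kappa_s = $ something from the free boundary condition, cf. \cite[Prop. 2.1]{Stahl2}) gives a Neumann-type condition on $W$ at $q_t$; one must check its sign is compatible with applying the Hopf boundary point lemma to conclude $W$ cannot attain a negative interior-or-boundary minimum. Combined with $W\ge 0$ at $t=\alpha_\lambda$ and the parabolic maximum principle, this yields $W\ge 0$ for all $t\in[\alpha_\lambda,0)$. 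I expect the boundary analysis to be the main obstacle: one needs the precise form of the boundary derivative of $\kappa$ (from differentiating $\langle\nu,F\rangle=0$ in time and space) and of $\tan(\lambda_0 y)$ at $q_t$, and must verify these combine with the correct sign — this is the step most sensitive to getting the geometric setup exactly right, whereas the interior maximum principle is essentially forced once the evolution equation for $W$ is in hand. An alternative, possibly cleaner, route for the whole lemma is a pure barrier/comparison argument: at any interior point $p\in\Gamma_t$, slide an Angenent oval $\mathrm{A}^{\lambda_0}_s$ (for appropriate $s$) to touch $\Gamma_t$ from below at $p$; by Proposition \ref{prop:Angenent oval barriers} the oval stays below for all earlier times, which at the touching point forces $\kappa_{\Gamma_t}(p)\ge \kappa_{\mathrm{A}}(p) = \lambda_0\cos\theta\tan(\lambda_0 y(p))$ — one just needs to confirm that such a touching oval exists with the right orthogonality at the relevant boundary turning angle, which is exactly what Lemma \ref{lem:Angenent BC} and the Remark following Proposition \ref{prop:Angenent oval barriers} provide when $\lambda_0\tanh\lambda_0=1$.
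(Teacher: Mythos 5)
Your main route is essentially the paper's proof: the paper also works on the old-but-not-ancient solutions $\{\Gamma^\lambda_t\}$, uses that equality holds on the initial slice $\mathrm{A}^\lambda_{t_\lambda}$, and runs an interior-plus-Hopf maximum principle before sending the approximation parameter to $\lambda_0$; the only real difference is that it tracks the ratio $uv/\kappa$ with $u=\mu\tan(\mu y)$, $v=\cos\theta$ and an auxiliary $\mu<\lambda$ (playing exactly the role your strict inequality $\lambda>\lambda_0$ plays), rather than the difference $W=\kappa-uv$. The two steps you leave open do close, and more cleanly than you fear: since $(\pd_t-\Delta)y=0$, $\vert\cd y\vert^2=\sin^2\theta$, $(\pd_t-\Delta)\cos\theta=\kappa^2\cos\theta$ and $\inner{\cd u}{\cd v}=-\mu^2\sec^2(\mu y)\sin^2\theta\,\kappa$, one gets the exact identity $(\pd_t-\Delta)W=\big(\kappa^2-2\mu^2\sec^2(\mu y)\sin^2\theta\big)W$, so the interior part is a bare linear maximum principle; and at the right boundary, where $y_s=y$ and $\kappa_s=\kappa$, a first zero of $W$ would give $W_s=\cos\theta\,\mu\big(\tan(\mu y)-\mu y\big)>0$, contradicting the Hopf lemma (which forces $W_s<0$ at a boundary minimum of a nonnegative solution), so the sign does come out right. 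One caution: your ``possibly cleaner'' alternative does not work as stated, because the one-parameter family $\{\mathrm{A}^{\lambda_0}_s\}_s$ passing through a given interior point $p$ has a prescribed tangent direction there, so you cannot in general slide an oval into first-order contact from below at a chosen point $p$; moreover Proposition \ref{prop:Angenent oval barriers} carries a hypothesis on the boundary turning angle that would need separate verification. Stick with the maximum-principle route.
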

\begin{proof}
It suffices to prove that $\frac{\kappa}{\cos\theta}\ge \lambda \tan(\lambda y)$ on each of the old-but-not-ancient solutions $\{\Gamma^\lambda_t\}_{t\in[\alpha_\lambda,0)}$. Note that equality holds 
on the initial timeslice $\Gamma^\lambda_{\alpha_\lambda}=\mathrm{A}^\lambda_{t_\lambda}$.

Given any $\mu<\lambda$, set $u\doteqdot\mu\tan(\mu y)$ and $v\doteqdot x_s=\cos\theta=\inner{\nu}{e_2}$. Observe that
\[
u_s=\mu^2\sec^2(\mu y)\sin\theta\,,\;\; (\pd_t-\Delta)u=-2\mu^2\sec^2(\mu y)\sin^2\theta u\,,
\]
\[
v_s=-\kappa\sin\theta\;\;\text{and}\;\;(\pd_t-\Delta)v=\kappa^2v\,.
\]

At an interior maximum of $\frac{uv}{\kappa}$ we observe that
\[
\frac{\cd\kappa}{\kappa}=\frac{\cd u}{u}+\frac{\cd v}{v}
\]
and hence
\ba
0\le(\pd_t-\Delta)\frac{uv}{\kappa}={}&\frac{uv}{\kappa}\left(\frac{(\pd_t-\Delta)u}{u}-2\inner{\frac{\cd u}{u}}{\frac{\cd v}{v}}\right)\nonumber\\
={}&2\mu^2\sec^2(\mu y)\sin^2\theta\left(1-\frac{uv}{\kappa}\right).\label{eq:interior case}
\ea

At a (without loss of generality right) boundary maximum of $\frac{uv}{\kappa}$, we have $y_s=y$ and $\kappa_s=\kappa$, and hence
\ba
\left(\frac{uv}{\kappa}\right)_s={}&\frac{uv}{\kappa}\left(\frac{u_s}{u}+\frac{v_s}{v}-\frac{\kappa_s}{\kappa}\right)\nonumber\\
={}&\frac{uv}{\kappa}\left(\frac{\sec^2(\mu y)\mu y}{\tan{\mu y}}-\kappa\frac{y}{v}-1\right)\nonumber\\
\le{}&\left(\frac{uv}{\kappa}-1\right)\tan(\mu y)\mu y\,.\label{eq:boundary case}
\ea

We may now conclude that $\max_{\overline\Gamma^\lambda_t}\frac{uv}{\kappa}$ remains less than one. Indeed, if $\frac{uv}{\kappa}$ ever reaches $1$, then there must be a first time $t_0>0$ and a point $x_0\in\overline\Gamma_t$ at which this occurs (note that $uv/\kappa$ is continuous on $\overline\Gamma_t$ up to the initial time). The point $x_0$ cannot be an interior point, due to \eqref{eq:interior case}, and it cannot be a boundary point, due to \eqref{eq:boundary case} and the Hopf boundary point lemma. We conclude that
\[
\frac{\kappa}{\cos\theta}\ge \mu\tan(\mu y)
\]
on $\{\Gamma^\lambda_t\}_{t\in[\alpha_\lambda,0)}$ for all $\mu<\lambda$. Now take $\mu\to\lambda$.
\end{proof}

If we parametrize $\Gamma_t$ as a graph $x\mapsto y(x,t)$ over the $x$-axis, then \eqref{eq:curvature lower} yields
\bann
\big(\sin(\lambda_0y)\big)_t={}&\lambda_0\cos(\lambda_0y)\kappa\sqrt{1+\vert y_x\vert^2}=\lambda_0\cos(\lambda_0y)\frac{\kappa}{\cos\theta}\ge\lambda_0\sin(\lambda_0y)
\eann
and hence
\begin{equation}\label{eq:rescaled height monotonicity}
\left(\mathrm{e}^{-\lambda_0^2t}\sin(\lambda_0y(x,t))\right)_t\ge 0\,.
\end{equation}

In particular, the limit
\[
A(x)\doteqdot\lim_{t\to-\infty}\mathrm{e}^{-\lambda_0^2t}y(x,t)
\]
exists in $[0,\infty)$ for each $x\in(-1,1)$, as claimed. 

We next prove that the limit is positive. 
The following lemma will be used to prove the requisite speed bound.

\begin{lemma}\label{lem:gradient estimate}
There exist $T>-\infty$ and $C<\infty$ such that
\begin{equation}\label{eq:curvature estimate}
\overline\kappa\le Ce^{t}\;\;\text{for all}\;\; t<T\,.
\end{equation}
\end{lemma}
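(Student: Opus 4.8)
The plan is to bound the maximal curvature $\overline\kappa(t)=\kappa(q_t)$ by comparing it against the boundary turning angle $\overline\theta(t)$, whose sine is already controlled by \eqref{sin theta upper bound}. By \eqref{eq:kappa bounds} we have the lower bound $\overline\kappa\ge\tan\overline\theta$ for free, so the content is the matching upper bound. The natural quantity to estimate is $\overline\kappa/\tan\overline\theta$, or equivalently $\kappa/\cos\theta$ evaluated at the boundary, which satisfies a favourable evolution: the function $w\doteqdot \kappa/\cos\theta=\kappa/v$ with $v=\inner{\nu}{e_2}$ obeys $(\pd_t-\Delta)w = 2w\inner{\cd\log v}{\cd\log w}$ type identities, and at the right boundary point $\kappa_s=\kappa$, $v_s=-\kappa\sin\theta$, so $(\log w)_s = (\log\kappa)_s-(\log v)_s = 1+\kappa\tan\theta>0$. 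Hence an interior spatial maximum of $w$ can only be ruled out if $w$ is also large there, and the Hopf lemma forces any boundary maximum to be attained at $q_t$ with the sign working in our favour — so the maximum of $\kappa/\cos\theta$ over $\Gamma_t$ is attained at $q_t$, giving $\sup_{\Gamma_t}\kappa/\cos\theta = \overline\kappa/\cos\overline\theta$.

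Having localised the maximum of $w=\kappa/\cos\theta$ to the boundary, I would derive an ODE differential inequality for $W(t)\doteqdot\overline\kappa(t)/\cos\overline\theta(t)$. At the right boundary point, using $\frac{d}{dt}\overline\theta=\overline\kappa$ (from the free boundary condition, since $q_t$ moves along $\pd B^2$ with normal speed $\overline\kappa$ and turning angle $\overline\theta$), one computes $\frac{d}{dt}\log\cos\overline\theta = -\tan\overline\theta\cdot\overline\kappa = -W\sin\overline\theta$. Combining this with the boundary evolution of $\kappa$ (namely $(\pd_t-\pd_s^2)\kappa = \kappa^3$ together with the boundary term $\kappa_s=\kappa$ contributing through the Hopf analysis, which at the spatial max of $w$ shows $\frac{d}{dt}\log\overline\kappa \le \overline\kappa^2$ up to controlled terms) should yield something of the form $\frac{d}{dt}\log W \le C\,\overline\kappa^2 \le C\,W^2\cos^2\overline\theta$. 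Since $\cos^2\overline\theta\le 1$ this alone is not quite enough, but feeding in \eqref{sin theta upper bound}, i.e. $\sin\overline\theta\le e^t$ so $\overline\kappa\ge\tan\overline\theta\sim e^t$ while we expect $\overline\kappa\lesssim e^t$, suggests the right renormalised quantity is $e^{-t}\overline\kappa$ or $W\cos\overline\theta$; one shows $\frac{d}{dt}\log(e^{-t}\overline\kappa)\le -1 + (\text{terms that are }O(e^{2t}))$, which integrates to give $e^{-t}\overline\kappa\le C$ for $t$ sufficiently negative.

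Concretely, the cleanest route is: (i) show $\sup_{\Gamma_t}\kappa/\cos\theta$ is attained at $q_t$ via the maximum-principle/Hopf argument above (this mirrors the argument in Lemma \ref{lem:lower curvature estimate}, run in the opposite direction); (ii) use \eqref{eq:min y lower bound} and Lemma \ref{lem:lower curvature estimate} to get the complementary two-sided control $\lambda_0\tan(\lambda_0\underline y)\le \kappa/\cos\theta|_{q_t}$ near $t=-\infty$, where $\underline y\le\sin\overline\theta\le e^t\to 0$ so $\tan(\lambda_0\underline y)\sim\lambda_0\underline y$; (iii) derive the ODE for $W(t)=\overline\kappa/\cos\overline\theta$ using $\dot{\overline\theta}=\overline\kappa$ and the boundary parabolic equation for $\kappa$, obtaining $\dot W\le W(W\cos^2\overline\theta\cdot(\text{const}) - W\sin^2\overline\theta)$ or similar, and conclude via a barrier/comparison ODE argument that $W\le C e^{t}(1+o(1))$, hence $\overline\kappa=W\cos\overline\theta\le Ce^t$ for $t<T$.

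I expect the main obstacle to be step (iii): getting the evolution inequality for $W(t)$ with the correct coefficient so that the linear term $-1$ (coming from $\dot{\overline\theta}=\overline\kappa$ and $\frac{d}{dt}\log\cos\overline\theta\approx -\overline\kappa\sin\overline\theta\approx -e^{2t}$... — actually here one must be careful, the decisive decay must come from elsewhere) genuinely dominates. The subtlety is that $\cos\overline\theta\to1$, so $W$ and $\overline\kappa$ are comparable, and one cannot extract exponential decay of $\overline\kappa$ from an inequality like $\dot{\overline\kappa}\le\overline\kappa^3$ alone — it must be coupled to the already-established $\sin\overline\theta\le e^t$ through the geometry (e.g. via the identity $\frac{d}{dt}\sin\overline\theta=\overline\kappa\cos\overline\theta$, which gives $\overline\kappa=\frac{d}{dt}\log\sin\overline\theta\cdot\frac{\sin\overline\theta}{\cos\overline\theta}$, so an upper bound on $\overline\kappa$ is equivalent to an upper bound on $\frac{d}{dt}\log\sin\overline\theta$). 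Thus the real task is an upper bound for $\frac{d}{dt}\sin\overline\theta$, i.e. showing $\sin\overline\theta$ does not decay faster than exponentially, which I would attack by a lower barrier: the circle $\mathrm{C}_{\theta^-(t)}$ from Proposition \ref{prop:circle barriers} lies below $\Gamma_t$ once it does so at one time, and $\theta^-(t)=\arcsin e^t$ gives $\sin\overline\theta\ge c\,e^t$; combined with a refined version of the curvature-pinching from Lemma \ref{lem:lower curvature estimate} this closes the estimate.
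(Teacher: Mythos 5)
Your proposal does not close, and you have in fact put your finger on the gap yourself: the final step requires a pointwise-in-time upper bound on $\tfrac{d}{dt}\sin\overline\theta=\overline\kappa\cos\overline\theta$, and the fix you offer --- a lower barrier giving $\sin\overline\theta\ge c\,\mathrm{e}^t$ --- cannot supply it. A two-sided exponential bound $c\,\mathrm{e}^t\le\sin\overline\theta\le\mathrm{e}^t$ controls only the time-\emph{integral} $\int_{-\infty}^t\overline\kappa\cos\overline\theta=\sin\overline\theta(t)$ and says nothing about the derivative at a given time; $\sin\overline\theta$ could still have spikes of slope much larger than $\mathrm{e}^t$. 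Likewise the ODE route in step (iii) cannot work as stated: at the boundary point $q_t$ one has no control on $\Delta\kappa$ (the point is a spatial maximum of $\kappa$, but that only gives a sign for $\Delta\kappa$ in the wrong direction for an upper bound on $\tfrac{d}{dt}\overline\kappa$), and as you note an inequality of the form $\dot{\overline\kappa}\lesssim\overline\kappa^3$ is useless for extracting exponential decay. Step (i) is fine but superfluous --- the monotonicity $\kappa_s\ge0$, $(\cos\theta)_s\le0$ on $\{x\ge0\}$ already places the maximum of $\kappa/\cos\theta$ at $q_t$ --- and it does not feed into any usable time evolution.

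The missing idea is to convert the \emph{minimum}-curvature bound, which is already pointwise in time, into a \emph{maximum}-curvature bound by a spatial Harnack inequality. Indeed \eqref{eq:kappa bounds} and \eqref{sin theta upper bound} give $\underline\kappa\le\tan\overline\theta\le\mathrm{e}^t/\sqrt{1-\mathrm{e}^{2t}}$ for free, so it suffices to show $\overline\kappa\le C\underline\kappa$. The paper does this (on each old-but-not-ancient solution, where an initial time slice is available for the maximum principle) by first establishing the interior gradient estimate $\vert\kappa_s\vert\le2\kappa$ for $t$ sufficiently negative --- via the maximum principle applied to $\vert\kappa_s\vert-\kappa+\inner{\gamma}{\nu}-\varepsilon\mathrm{e}^{t-\alpha_\lambda}$, using $\vert\inner{\gamma}{\nu}\vert\le\lambda^{-2}\underline\kappa\le\kappa$ and the smallness of $\kappa$ at early times --- and then integrating $\vert(\log\kappa)_s\vert\le2$ from $p_t$ to $q_t$ over the bounded arc length to get $\overline\kappa\le\mathrm{e}^2\underline\kappa$. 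A bootstrapping in time then shows the threshold time $T$ before which $\kappa<\tfrac12$ (needed for the gradient estimate) can be taken independent of $\lambda$. Some such mechanism for ruling out spatial (or temporal) spikes of $\kappa$ is exactly what your argument lacks.
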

\begin{proof}
We will prove the estimate for each old-but-not-ancient solution $\{\Gamma^\lambda_t\}_{t\in(\alpha_\lambda,0)}$. We first prove a crude gradient estimate of the form
\begin{equation}\label{eq:crude kappa gradient estimate}
\vert\kappa_s\vert\le 2\kappa
\end{equation}
for $t$ sufficiently negative. It will suffice to prove that
\begin{equation}\label{eq:1k}
\vert\kappa_s\vert-\kappa+\inner{\gamma}{\nu}\le 0\,.
\end{equation}
Indeed, since $\inner{\gamma}{\nu}_s=\kappa\inner{\gamma}{\tau}$ has the same sign as the $x$-coordinate, we may estimate, as in \eqref{eq:curvature lower},
\begin{equation}\label{eq:gamma dot nu le kappa}
\vert\!\inner{\gamma}{\nu}\!\vert\le\vert\!\inner{\gamma}{\nu}\!\vert_{x=0}\le\lambda^{-2}\kappa|_{x=0}=\lambda^{-2}\min_{\Gamma_t}\kappa\le \kappa\,.
\end{equation}

For $\lambda$ sufficiently close to $\lambda_0$, we have $\kappa|_{t=\alpha_\lambda}<1/2$. Denote by $T^\lambda$ the first time at which $\kappa$ reaches $1/2$. Since $\kappa$ is continuous up to the initial time $\alpha_\lambda$, we have $T^\lambda>\alpha_\lambda$. We claim that \eqref{eq:1k} holds for $t<T^\lambda$. Indeed, it is satisfied on the initial timeslice $\Gamma^\lambda_{\alpha_\lambda}=\mathrm{A}^\lambda_{t_\lambda}$ since
\[
\kappa_s^2-\kappa^2=\lambda^2\left(\cos^2\theta\sin^2\theta-\sin^2\theta-a^2_\lambda\right)=-\lambda^2(\sin^4\theta+a_\lambda^2)\le 0\,,
\]
whereas $\inner{\gamma}{\nu}\le 0$. We will show that
\[
f_\varepsilon\doteqdot\vert\kappa_s\vert-\kappa+\inner{\gamma}{\nu}-\varepsilon\mathrm{e}^{t-\alpha_\lambda}
\]
remains negative up to time $T^\lambda$. Suppose, to the contrary, that $f_\varepsilon$ reaches zero at some time $t<T^\lambda$ at some point $p\in\overline\Gamma_t$. Since $\vert\kappa_s\vert-\kappa+\inner{\gamma}{\nu}$ vanishes at the boundary, $p$ must be an interior point. Since $\kappa_s$ vanishes at the $y$-axis, and the curve is symmetric, we may assume that $x(p)>0$. At such a point,
\bann
0\le (\pd_t-\Delta)f_\varepsilon={}&\kappa^2(4\kappa_s-\kappa+\inner{\gamma}{\nu})-2\kappa-\varepsilon\mathrm{e}^{t-\alpha_\lambda}\\
={}&\kappa^2(3[\kappa-\inner{\gamma}{\nu}]+4\varepsilon\mathrm{e}^{t-\alpha_\lambda})-2\kappa-\varepsilon\mathrm{e}^{t-\alpha_\lambda}\,.
\eann
Recalling \eqref{eq:gamma dot nu le kappa} and estimating $\kappa\le \frac{1}{2}$ yields
\[
0\le 6\kappa^3-2\kappa+(4\kappa^2-1)\varepsilon\mathrm{e}^{t-\alpha_\lambda}<0\,,
\]
which is absurd. So $f_\varepsilon$ does indeed remain negative, and taking $\varepsilon\to 0$ yields \eqref{eq:crude kappa gradient estimate} for $t<T^\lambda$.

Since $\mathrm{Length}(\Gamma_t\cap\{x\ge 0\})\le 1$, integrating \eqref{eq:crude kappa gradient estimate} 
yields
\[
\overline\kappa\le \mathrm{e}^2\underline\kappa\;\;\text{for}\;\; t<T^\lambda\,.
\]
Recalling \eqref{eq:kappa bounds} and \eqref{sin theta upper bound}, this implies that
\[
\overline\kappa\le \mathrm{e}^2\frac{\mathrm{e}^{t}}{\sqrt{1-\mathrm{e}^{2t}}}\;\;\text{for}\;\; t<T^\lambda\,.
\]
Taking $t=T^\lambda$ we find that $T^\lambda\ge T$, where $T$ is independent of $\lambda$, so we conclude that
\[
\overline\kappa\le C\mathrm{e}^{t}\;\;\text{for}\;\; t<T\,,
\]
where $C$ and $T$ do not depend on $\lambda$.
\end{proof}

\begin{lemma}
There exist $C<\infty$ and $T>-\infty$ such that
\[
\frac{\kappa}{y}\le \lambda_0^2+C\mathrm{e}^{2t}\;\;\text{for}\;\; t<T\,.
\]
\end{lemma}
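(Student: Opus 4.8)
The plan is to run a maximum-principle argument on the quantity $g\doteqdot \frac{\kappa}{\cos\theta\,\tan(\lambda_0 y)}=\frac{\kappa}{v\,u/\lambda_0}$, i.e.\ essentially the reciprocal of the quantity $\frac{uv}{\kappa}$ controlled in Lemma~\ref{lem:lower curvature estimate}, but now from above; the target inequality $\frac{\kappa}{y}\le\lambda_0^2+C\mathrm{e}^{2t}$ will then follow because $\tan(\lambda_0 y)\ge\lambda_0 y$ and $v=\cos\theta\le 1$, together with the refinement that $\cos\theta\le\mathrm{e}^{t}\cdot(\text{bounded})$ coming from \eqref{sin theta upper bound}. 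As usual, it suffices to prove the estimate on each old-but-not-ancient solution $\{\Gamma^\lambda_t\}$ with constants independent of $\lambda$, since equality (in the Angenent identity $\frac{\kappa}{\cos\theta}=\lambda\tan(\lambda y)$) holds on the initial slice $\Gamma^\lambda_{\alpha_\lambda}=\mathrm{A}^\lambda_{t_\lambda}$.

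First I would set $u\doteqdot\lambda\tan(\lambda y)$ and $v\doteqdot\cos\theta$ as in Lemma~\ref{lem:lower curvature estimate}, recall the evolution identities $(\pd_t-\Delta)u=-2\lambda^2\sec^2(\lambda y)\sin^2\theta\,u$, $(\pd_t-\Delta)v=\kappa^2 v$, $(\pd_t-\Delta)\kappa=\kappa^3$, and compute $(\pd_t-\Delta)$ of the ratio $w\doteqdot\frac{\kappa}{uv}$ at an interior spatial critical point, where $\frac{\cd\kappa}{\kappa}=\frac{\cd u}{u}+\frac{\cd v}{v}$. The gradient terms combine via $2\inner{\frac{\cd u}{u}}{\frac{\cd v}{v}}$ exactly as in \eqref{eq:interior case}, and the zeroth-order terms give $(\pd_t-\Delta)w = w\big(\kappa^2 - (\pd_t-\Delta)u/u - \kappa^2\big) + (\text{gradient terms})= 2\lambda^2\sec^2(\lambda y)\sin^2\theta\, w + (\text{gradient terms})$; the key point is that these zeroth-order terms have a \emph{favourable sign} for an \emph{upper} bound on $w$ only after one absorbs them into an exponentially growing barrier, so the natural object to estimate is $w\,\mathrm{e}^{-2\lambda^2 t}$ or, since $\sec^2(\lambda y)\sin^2\theta\le C\mathrm{e}^{2t}$ for $t$ negative (using $y\to 0$ and $\sin\theta\le\mathrm{e}^t$), a quantity like $w-\lambda^2-C\mathrm{e}^{2t}$. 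At a boundary maximum one has $y_s=y$, $\kappa_s=\kappa$, and the computation of $\big(\frac{\kappa}{uv}\big)_s$ parallels \eqref{eq:boundary case} and is controlled using $Y\cot Y<1<Y/\tan Y$-type elementary inequalities, so the Hopf lemma rules out a boundary maximum.

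Concretely, I would show $f_\varepsilon\doteqdot w-\lambda^2-C\mathrm{e}^{2t}-\varepsilon\mathrm{e}^{t-\alpha_\lambda}$ stays negative for $t<T$: it is negative on the initial slice (where $w=\lambda^{-1}\cdot\lambda=\dots$, i.e.\ $w\equiv\lambda^2$ there — wait, there $\kappa/(\cos\theta)=\lambda\tan(\lambda y)$ gives $\kappa/(uv)=1/v\cdot$? more precisely $w=\kappa/(uv)=1/v<$ bounded, handled by choosing $C$), negative at the boundary by the Hopf argument, and an interior first zero is excluded because there $0\le(\pd_t-\Delta)f_\varepsilon\le 2\lambda^2\sec^2(\lambda y)\sin^2\theta(w-\text{something})-2C\mathrm{e}^{2t}-\varepsilon\mathrm{e}^{t-\alpha_\lambda}<0$ once $C$ is chosen larger than $\lambda_0^2\sup\sec^2(\lambda y)\le\lambda_0^2\sup\sec^2(\lambda_0\overline y)$, which is bounded for $t<T$ by \eqref{sin theta upper bound} and \eqref{eq:min y lower bound}. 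Sending $\varepsilon\to 0$ and then $\lambda\to\lambda_0$ gives $\frac{\kappa}{uv}\le\lambda_0^2+C\mathrm{e}^{2t}$; since $uv=\lambda_0\tan(\lambda_0 y)\cos\theta\ge\lambda_0^2 y\cos\theta$ and we may further use $\cos\theta\le 1$, this yields $\frac{\kappa}{y}\le\lambda_0^2+C'\mathrm{e}^{2t}$ after adjusting constants (the extra factor from replacing $\tan$ by its linearization and $\cos\theta$ by $1$ only helps, since both corrections push $uv$ down by a factor that is $1+O(\mathrm{e}^{2t})$).

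The main obstacle I anticipate is bookkeeping the sign in the interior computation: one must check that the combination of the $(\pd_t-\Delta)u/u$ term (which is negative, of order $\mathrm{e}^{2t}$) and the cross gradient term $-2\inner{\cd u/u}{\cd v/v}$ does not produce a positive contribution large enough to defeat the $-2C\mathrm{e}^{2t}$ barrier term — equivalently, that at a first interior zero of $f_\varepsilon$ the gradient terms have the right sign (as happened in \eqref{eq:interior case}, where they dropped out entirely because $w=1$ there; here $w=\lambda^2+C\mathrm{e}^{2t}+\varepsilon\mathrm{e}^{t-\alpha_\lambda}\approx\lambda^2\ne 1$, so they will \emph{not} drop out and must be estimated, perhaps forcing a slightly different choice of test quantity, e.g.\ tracking $\mathrm{e}^{-2\lambda^2 t}\kappa/(uv)$ instead, for which the zeroth-order terms cancel cleanly and only a lower-order $\mathrm{e}^{2t}$-type error from the $T$-dependent region remains). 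Getting the $\lambda$-independence of $C$ and $T$ is routine once \eqref{eq:curvature estimate}, \eqref{sin theta upper bound} and \eqref{eq:min y lower bound} are invoked to bound $\sec^2(\lambda\overline y)$ uniformly.
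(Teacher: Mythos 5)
Your interior computation is essentially sound (at a spatial maximum of $w\doteqdot\frac{\kappa}{uv}$ the cross term $\tfrac{2w}{uv}\inner{\cd u}{\cd v}=\tfrac{2w}{uv}u_sv_s$ is negative, and the remaining forcing $2\mu^2\sec^2(\mu y)\sin^2\theta\,w$ is of order $\mathrm{e}^{2t}$), but the argument breaks at the boundary, and this is a genuine gap rather than bookkeeping. At the right boundary point one has $\kappa_s=\kappa$, $y_s=y=\sin\theta$, hence
\[
\frac{w_s}{w}=1-\frac{2\mu y}{\sin(2\mu y)}+\kappa\tan\theta
=\mu y\tan(\mu y)\Big(\frac{\kappa}{uv}-1\Big)+\bigl(1-\mu y\cot(\mu y)\bigr),
\]
and since $Y\cot Y<1$, this is \emph{strictly positive} whenever $w\ge 1$. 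So at a putative first boundary touching of your barrier, $w_s>0$, which is exactly what the Hopf lemma predicts — there is no contradiction, and the maximum principle argument does not close. This is precisely dual to Lemma \ref{lem:lower curvature estimate}: the Robin condition for $uv=\mu\tan(\mu y)\cos\theta$ satisfies $\frac{(uv)_s}{uv}=\mu y\cot(\mu y)<1=\frac{\kappa_s}{\kappa}$ at the boundary, which is the favourable sign for bounding $\frac{uv}{\kappa}$ from above (the lower speed bound) but the unfavourable sign for bounding $\frac{\kappa}{uv}$ from above. The residual boundary forcing $1-\mu y\cot(\mu y)\approx\tfrac{1}{3}(\mu y)^2$ cannot be absorbed by a spatially constant barrier, since the barrier contributes nothing to the normal derivative. (Separately, your parenthetical about the initial slice is confused: there $w\equiv 1$ exactly, not $\lambda^2$ or $1/v$.)

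The paper avoids this entirely by choosing the denominator to satisfy the \emph{same} Robin condition as $\kappa$: it works with $\frac{\kappa}{y}$, for which $\bigl(\frac{\kappa}{y}\bigr)_s=\frac{\kappa_s}{y}-\frac{\kappa}{y}\frac{y_s}{y}=0$ at the boundary, so the Hopf lemma applies cleanly. In the interior, $(\pd_t-\Delta)\frac{\kappa}{y}=\kappa^2\frac{\kappa}{y}+2\inner{\cd\frac{\kappa}{y}}{\frac{\cd y}{y}}$, and the reaction coefficient $\kappa^2$ is bounded by $C\mathrm{e}^{2t}$ using the previously established decay $\overline\kappa\le C\mathrm{e}^t$ of \eqref{eq:curvature estimate} — an input you do not invoke for this purpose. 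A first crude \textsc{ode} comparison gives $\max\frac{\kappa}{y}\le C\max_{\Gamma^\lambda_{\alpha_\lambda}}\frac{\kappa}{y}$, a second pass upgrades this to $\max\frac{\kappa}{y}\le(1+C\mathrm{e}^{2t})\max_{\Gamma^\lambda_{\alpha_\lambda}}\frac{\kappa}{y}$, and on the initial slice $\frac{\kappa}{y}=\frac{\lambda\tan(\lambda y)}{y}\cos\theta\to\lambda^2$ as $\lambda\to\lambda_0$ (since $y\to 0$ there), which yields the stated constant $\lambda_0^2$. If you wish to salvage your route, you would need to add a spatially varying corrector (as in the $\varepsilon(1-\inner{\gamma}{\nu})$ trick used elsewhere in the paper) to cancel the positive boundary flux, but the paper's choice of test quantity makes this unnecessary.
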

\begin{proof}
Consider the old-but-not-ancient solution $\{\Gamma^\lambda_t\}_{t\in(-\infty,0)}$. By \eqref{eq:curvature estimate}, we can find $C<\infty$ and $T>-\infty$ such that
\bann
(\pd_t-\Delta)\frac{\kappa}{y}={}&\kappa^2\frac{\kappa}{y}+2\inner{\cd\frac{\kappa}{y}}{\frac{\cd y}{y}}\\
\le{}&C\mathrm{e}^{2t}\frac{\kappa}{y}+2\inner{\cd\frac{\kappa}{y}}{\frac{\cd y}{y}}\;\;\text{for}\;\; t<T\,.
\eann
Since, at a boundary point,
\[
\left(\frac{\kappa}{y}\right)_s=\frac{\kappa_s}{y}-\frac{\kappa}{y}\frac{y_s}{y}=0\,,
\]
the Hopf boundary point lemma 
and the \textsc{ode} comparison principle yield
\[
\max_{\Gamma^\lambda_t}\frac{\kappa}{y}\le C\max_{\Gamma^\lambda_{\alpha_\lambda}}\frac{\kappa}{y}\;\;\text{for}\;\; t\in(\alpha_\lambda,T)\,.
\]

But now
\bann
(\pd_t-\Delta)\frac{\kappa}{y}\le{}&C\mathrm{e}^{2t}\max_{\Gamma^\lambda_{\alpha_\lambda}}\frac{\kappa}{y}+2\inner{\cd\frac{\kappa}{y}}{\frac{\cd y}{y}}\;\;\text{for}\;\; t<T\,,
\eann
and hence, by \textsc{ode} comparison,
\bann
\max_{\Gamma^\lambda_t}\frac{\kappa}{y}\le \max_{\Gamma^\lambda_{\alpha_\lambda}}\frac{\kappa}{y}\left(1+C\mathrm{e}^{2t}\right)\;\;\text{for}\;\; t\in(\alpha_\lambda,T)\,.
\eann
Since, on the initial timeslice $\Gamma^\lambda_{\alpha_\lambda}=\mathrm{A}_{t_\lambda}^\lambda$,
\[
\frac{\kappa}{y}=\frac{\lambda\tan(\lambda y)}{y}\cos\theta\,,
\]
the claim follows upon taking $\lambda\to\lambda_0$.
\end{proof}

It follows that
\[
\left(\log\underline y(t)-\lambda_0^2t\right)_t\le C\mathrm{e}^{2t}\;\;\text{for}\;\; t<T
\]
and hence, integrating from time $t$ up to time $T$,
\[
\log\underline y(t)-\lambda_0^2t\ge\log\underline y(T)-\lambda_0^2T-C\;\;\text{for}\;\; t<T\,.
\]
So we indeed find that
\begin{lemma} \label{lem:backwards asymptotics}
the limit
\begin{equation}\label{eq:backwards asymptotics}
A\doteqdot\lim_{t\to-\infty}\mathrm{e}^{-\lambda_0^2t}\underline y(t)
\end{equation}
exists in $(0,\infty)$ on the particular ancient solution $\{\Gamma_t\}_{(-\infty,0)}$.
\end{lemma}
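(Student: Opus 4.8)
The plan is to assemble the conclusion from the two one-sided estimates already in hand. On one side, the lower curvature bound \eqref{eq:curvature lower} of Lemma \ref{lem:lower curvature estimate}, rewritten in graph form as \eqref{eq:rescaled height monotonicity}, shows that $t\mapsto \mathrm{e}^{-\lambda_0^2t}\sin(\lambda_0\underline y(t))$ is nondecreasing; since $\underline y(t)\to 0$ as $t\to-\infty$ (because the solution emerges from the horizontal bisector, or more concretely because $\underline y\le\sin\overline\theta\le\mathrm{e}^t$ by \eqref{sin theta upper bound} and \eqref{eq:min y lower bound}), the ratio $\sin(\lambda_0\underline y)/\underline y\to\lambda_0$, and hence $A=\lim_{t\to-\infty}\mathrm{e}^{-\lambda_0^2t}\underline y(t)$ exists in $[0,\infty)$ as a monotone (decreasing in $t$, i.e. increasing as $t\to-\infty$ — so bounded by its value near $t=0$) limit. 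This is exactly what was recorded just before the statement, so the only remaining task is to rule out $A=0$.

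For that I would use the upper bound $\kappa/y\le\lambda_0^2+C\mathrm{e}^{2t}$ for $t<T$ from the preceding lemma. Evaluating at the point $p_t$ on the $y$-axis, where $\kappa=\underline\kappa$, $y=\underline y$, and the graph has zero slope so that $(\sin(\lambda_0\underline y))_t = \lambda_0\cos(\lambda_0\underline y)\,\underline\kappa$, or more simply working directly with $\underline y$ via $\underline y_t = \underline\kappa$ at the vertex (the normal speed equals the curvature there), one gets
\[
\left(\log\underline y(t)\right)_t=\frac{\underline\kappa}{\underline y}\le \lambda_0^2+C\mathrm{e}^{2t}\,,
\]
i.e. $\left(\log\underline y(t)-\lambda_0^2 t\right)_t\le C\mathrm{e}^{2t}$ for $t<T$. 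Integrating this differential inequality from a time $t<T$ up to $T$ gives
\[
\log\underline y(t)-\lambda_0^2t\ge \log\underline y(T)-\lambda_0^2 T-\frac{C}{2}\mathrm{e}^{2T}=:c_0>-\infty\,,
\]
uniformly in $t<T$. Exponentiating, $\mathrm{e}^{-\lambda_0^2 t}\underline y(t)\ge \mathrm{e}^{c_0}>0$ for all $t<T$, so the monotone limit $A$ cannot be $0$.

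Combining the two halves: $A$ exists in $[0,\infty)$ by monotonicity plus the height bound, and $A\ge \mathrm{e}^{c_0}>0$ by the integrated upper speed bound, hence $A\in(0,\infty)$, which is the assertion. The main (and essentially only) obstacle is bookkeeping the vertex evolution correctly — making sure that at $p_t$ the identity $\underline y_t=\underline\kappa$ (equivalently $(\sin\lambda_0\underline y)_t = \lambda_0\cos(\lambda_0\underline y)\underline\kappa$ from the graph equation, using that $y_x=0$ there) is legitimate for a.e. $t$ despite $p_t$ being only a moving minimum point; this follows from the smoothness of the flow and a standard argument (e.g. Hamilton's trick) for differentiating a spatial minimum, or simply by noting $\underline y(t)=y(0,t)$ is smooth by the reflection symmetry. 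Everything else is a one-line integration, and indeed the displayed inequalities immediately preceding the lemma statement already carry this out; the proof is therefore just a matter of citing \eqref{eq:rescaled height monotonicity}, the $\kappa/y$ lemma, and the two integrations.
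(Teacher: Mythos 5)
Your argument is correct and is essentially the paper's own proof: existence of the limit in $[0,\infty)$ follows from the monotonicity \eqref{eq:rescaled height monotonicity} (together with $\underline y\le\mathrm{e}^t\to 0$), and positivity follows by evaluating the bound $\kappa/y\le\lambda_0^2+C\mathrm{e}^{2t}$ at the vertex, where $\underline y_t=\underline\kappa$, and integrating $\left(\log\underline y-\lambda_0^2t\right)_t\le C\mathrm{e}^{2t}$ from $t$ up to $T$ --- exactly the two displays preceding the lemma. One small verbal slip: $\mathrm{e}^{-\lambda_0^2t}\sin(\lambda_0y)$ is nondecreasing in $t$, hence nonincreasing as $t\to-\infty$ and bounded below by $0$ (not ``increasing as $t\to-\infty$'' as your parenthetical says), but the conclusion you draw from it is the correct one.
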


\section{Uniqueness}

Now let $\{\Gamma_t\}_{t\in(-\infty,0)}$, $\Gamma_t=\pd_{\mathrm{rel}}\Omega_t$, be \emph{any} convex, locally uniformly convex ancient free boundary curve shortening flow in the disc. 
By Stahl's theorem \cite{Stahl2}, we may assume that $\Gamma_t$ contracts to a point on the boundary as $t\to 0$.

\subsection{Backwards convergence}

We first show that $\overline\Gamma_t$ converges to a bisector as $t\to-\infty$.
\begin{lemma}\label{lem:backwards convergence}
Up to a rotation of the plane,
\[
\overline\Gamma_t\underset{C^\infty}{\longrightarrow}[-1,1]\times\{0\}\,\,\text{as}\;\;t\to-\infty\,.
\]
\end{lemma}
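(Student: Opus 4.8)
The plan is to show that an arbitrary convex, locally uniformly convex ancient solution $\{\Gamma_t\}_{t\in(-\infty,0)}$ — normalized by Stahl's theorem so that it shrinks to a boundary point as $t\to 0$ — must look, at very negative times, like the flat bisector. The natural way to organize this is via a backwards blow-down: for a sequence $t_j\to-\infty$ examine the rescaled flows and extract a limit, then identify the limit as a stationary bisector, and finally upgrade pointwise/subsequential convergence to genuine $C^\infty$ convergence along the whole time axis.

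First I would establish the basic geometric control available as $t\to-\infty$. Since $\{\Gamma_t\}$ is convex and contained in $B^2$, and shrinks to a point at $t=0$, the enclosed region $\Omega_t$ is monotone in $t$ (by the avoidance principle applied to the flow against itself, or directly since the normal speed $\kappa>0$ points inward at interior times), so $\Omega_{-\infty}\doteqdot\bigcup_{t<0}\Omega_t$ is a well-defined convex open subset of $B^2$. The key point is that $\Omega_{-\infty}$ must be a halfdisc: the circle barriers of Proposition \ref{prop:circle barriers} show that $\Gamma_t$ cannot retreat too fast — a solution lying below some $\mathrm{C}_{\theta_0}$ at time $t_0$ stays below $\mathrm{C}_{\theta^+(t^+_0+t-t_0)}$, and $\theta^+(s)\to 0$ as $s\to-\infty$, which forces $\Gamma_t$ to have turning angle at the boundary tending to $0$, i.e. the boundary points converge to an antipodal pair on $\pd B^2$ and the curve flattens. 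Rotating so that this antipodal pair is $\{(\pm 1,0)\}$, one gets $\Omega_{-\infty}\subseteq B^2\cap\{y>0\}$; meanwhile convexity plus the orthogonality condition and the fact that $\Gamma_t$ contracts to an interior-boundary point prevents $\Omega_{-\infty}$ from degenerating further, so $\Omega_{-\infty}=B^2\cap\{y>0\}$ and $\Gamma_t\to[-1,1]\times\{0\}$ at least in Hausdorff distance, with $\nu\to(0,-1)$ pointwise.

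Next I would promote this to smooth convergence. Writing $\Gamma_t$ as a graph $x\mapsto y(x,t)$ over the $x$-axis (legitimate once the turning angle at the boundary is small, say for $t<T$), convexity and the boundary condition give $\vert y_x\vert\le\tan\overline\theta(t)\to 0$, and \eqref{sin theta upper bound} gives $\sin\overline\theta\le e^t$, hence $\sup|y(\cdot,t)|\to 0$. Then Stahl's interior-in-time, global-in-space Ecker--Huisken type estimates \cite{Stahl1} — exactly as used in the proof of Proposition \ref{prop:existence} — yield uniform bounds on $\kappa$ and all its derivatives on $\Gamma_t$ for $t$ in any interval $[t_j-1,t_j+1]$, with constants independent of $j$. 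Interpolating with the $C^0$ decay of the graph (or simply noting $\kappa=y_{xx}(1+y_x^2)^{-3/2}$ and bootstrapping) gives $\Vert y(\cdot,t)\Vert_{C^k}\to 0$ as $t\to-\infty$ for every $k$, which is precisely $\overline\Gamma_t\to[-1,1]\times\{0\}$ in $C^\infty$, including the convergence of the boundary points $\pd\Gamma_t\to\{(\pm1,0)\}$ and of the normal.

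The main obstacle is the first step: extracting the halfdisc as the limiting region, i.e.\ ruling out that $\Omega_{-\infty}$ is a proper convex subset of a halfdisc (for instance a smaller disc-cap, or a lens pinched away from a diameter). This is where the circle barriers and the lower curvature-type bounds must be combined carefully: Proposition \ref{prop:circle barriers} controls the solution from the \emph{outside} and forces the boundary turning angle to $0$; one then needs a complementary \emph{inside} statement — that $\Omega_{-\infty}$ contains, for every $\omega$, the region bounded by $\mathrm{C}_{\omega}$ (as observed in the proof of Proposition \ref{prop:existence} via $\frac{1-\cos\omega}{\sin\omega}=e^t$), so that $\Omega_{-\infty}$ contains $B^2\cap\{y>\epsilon\}$ for every $\epsilon>0$. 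Together these pin $\Omega_{-\infty}$ to the open halfdisc. Once the region is identified, the convergence of the \emph{flow} (rather than just the regions) is routine given Stahl's estimates, so I expect the write-up to spend most of its effort on this squeezing argument and then invoke the parabolic regularity theory wholesale.
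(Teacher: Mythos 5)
Your second step (upgrading Hausdorff convergence of the regions to smooth convergence via graphicality, Stahl's estimates, and uniqueness of the sublimit) is exactly what the paper does. The problem is the first step, which you rightly flag as the crux but do not actually close. Proposition \ref{prop:circle barriers} is a forward-in-time avoidance statement; applied contrapositively to an ancient solution that survives until $t=0$ it yields only that (i) $\Gamma_t$ must meet the cap above $\mathrm{C}_{\theta^+(t)}$ --- a region that fills out the entire halfdisc as $t\to-\infty$, so this degenerates --- and (ii) $\Gamma_t$ cannot lie entirely inside the cap above $\mathrm{C}_{\theta}$ once $\sin\theta>\mathrm{e}^{t}$, i.e.\ $\min_{\Gamma_t}y\lesssim \mathrm{e}^t$. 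Neither statement controls the two boundary points $\pd\Gamma_t\subset\pd B^2$, which is what ``boundary turning angle $\to 0$'' means: a convex region with orthogonal corners at, say, $(\pm\tfrac12,\tfrac{\sqrt3}{2})$ whose relative boundary dips down near the origin is consistent with everything the barriers give you, so $\Omega_{-\infty}$ could a priori be a proper convex cap. The estimates you then invoke to squeeze --- \eqref{sin theta upper bound} and the ``inside'' containment $\Omega_t\supseteq \mathrm{C}_{\omega(t)}$-cap from the proof of Proposition \ref{prop:existence} --- are established in the paper only for the constructed old-but-not-ancient solutions: their proofs (Lemma \ref{lem:crude estimates}) use reflection symmetry and the monotonicity $\kappa_s>0$ in $\{x>0\}$ to know that the boundary point maximizes $\kappa$, whence $\tfrac{d}{dt}\overline\theta=\kappa(q_t)\ge\tan\overline\theta$. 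For an arbitrary convex ancient solution both properties are proved only \emph{after}, and using, the present lemma (Lemma \ref{lem:reflection} and the $\kappa_s>0$ lemma in the asymptotics subsection), so importing them here is circular.

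The paper's mechanism for the key step is different and entirely elementary: integrating the first variation of area gives $\area(\Omega_t)=\int_t^0\big(\int_{\Gamma_s}\kappa\,ds\big)\,ds$, and since the total turning $\int_{\Gamma_t}\kappa\,ds$ is positive and monotone while $\area(\Omega_t)\le\pi$, the total turning must tend to $0$ as $t\to-\infty$. Gauss--Bonnet (with two right-angled corners) identifies the total turning with $\pi$ minus the length of $\pd\Omega_t\cap\pd B^2$, so the boundary points converge to an antipodal pair and the curve, having vanishing total curvature, Hausdorff-converges to the corresponding diameter; your regularity argument then takes over. To salvage your outline you would need a substitute of comparable strength valid for a general, not-yet-symmetric solution; as written, the identification of $\Omega_{-\infty}$ with the halfdisc is asserted rather than proved.
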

\begin{proof}
Set $A(t)\doteqdot \area(\Omega_t)$. Integrating the variational formula for area 
yields
\[
A(t)=\int_t^0\!\!\!\int_{\Gamma_t}d\theta\,,
\]
where $\theta$ is the turning angle. Since convexity ensures that the total turning angle $\int_{\Gamma_t}d\theta$ is increasing and $A(t)\le \pi$ for all $t$, we find that
\[
\int_{\Gamma_t}d\theta\to 0\,\,\text{as}\;\;t\to-\infty\,.
\]
Monotonicity of the flow, the free boundary condition and convexity now imply that the enclosed regions $\Omega_t$ satisfy
\[
\overline\Omega_t\to B^2\cap \{y\ge 0\}\,\,\text{as}\;\;t\to-\infty
\]
in the Hausdorff topology.

If we now represent $\Gamma_t$ graphically over the $x$-axis, then convexity and the boundary condition ensure that the height and gradient are bounded by the height at the boundary. Stahl's estimates \cite{Stahl1} now give bounds for $\kappa$ and its derivatives up to the boundary depending only on the height at the boundary. We then get smooth subsequential convergence along any sequence of times $t_j\to-\infty$. The claim follows since any sublimit is the horizontal segment.
\end{proof}

We henceforth assume, without loss of generality, that the backwards limit is the horizontal bisector.

\subsection{Reflection symmetry}

We can now prove that the solution is reflection symmetric using Alexandrov reflection across lines through the origin (see Chow and Gulliver \cite{ChGu01}).

\begin{lemma}\label{lem:reflection}
$\Gamma_t$ is reflection symmetric about the $y$-axis for all $t$.
\end{lemma}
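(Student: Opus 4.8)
The plan is to use Alexandrov reflection across lines through the origin, as in Chow--Gulliver \cite{ChGu01}, combined with the backwards convergence to the horizontal bisector established in Lemma \ref{lem:backwards convergence}. For a unit vector $e_\phi=(\cos\phi,\sin\phi)$, let $\ell_\phi$ be the line through the origin in direction $e_\phi$, let $R_\phi$ be the reflection across $\ell_\phi$, and let $H_\phi^+$, $H_\phi^-$ be the two open half-discs cut out by $\ell_\phi$. The free boundary condition means the disc $B^2$ and the barrier $\partial B^2$ are themselves symmetric under $R_\phi$, so $R_\phi$ maps free boundary solutions to free boundary solutions. I would consider the quantity measuring whether $\Gamma_t\cap H_\phi^+$, reflected across $\ell_\phi$, lies on the $\Omega_t$-side of $\Gamma_t\cap H_\phi^-$: precisely, one asks whether $R_\phi(\overline\Omega_t\cap \overline H_\phi^+)\subseteq \overline\Omega_t\cap\overline H_\phi^-$ (``$\Gamma_t$ reflects into $\Omega_t$ across $\ell_\phi$'').

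The key steps, in order: First, I would observe that since $\overline\Gamma_t\to[-1,1]\times\{0\}$ smoothly as $t\to-\infty$, and the convex region $\Omega_t$ converges to $B^2\cap\{y\ge 0\}$, for any $\phi\notin\{0,\pi\}$ there is a time $T_\phi>-\infty$ before which the reflection inclusion holds strictly: $R_\phi(\overline\Omega_t\cap\overline H_\phi^+)\subseteq \Omega_t\cap \overline H_\phi^-$ with the curve $\Gamma_t$ meeting $\ell_\phi$ transversally (this uses that the limit segment is symmetric across the $y$-axis, i.e.\ across $\ell_{\pi/2}$, but for $\phi$ near any value the near-segment picture makes the one-sided containment hold — more carefully, for each fixed $\phi\neq 0,\pi$ the limiting configuration has $\Gamma_{-\infty}\cap H_\phi^+$ reflecting strictly inside $B^2\cap\{y>0\}$, so strict containment persists for $t$ sufficiently negative). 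Second, I would run the Alexandrov reflection argument forwards in time: the function $\Gamma_t$ and its reflection $R_\phi(\Gamma_t)$ are two solutions of free boundary curve shortening flow, and by the strong maximum principle together with the Hopf boundary point lemma (the boundary $\partial B^2$ being $R_\phi$-invariant and met orthogonally), the strict containment is preserved for all later $t$, and can only degenerate if $\Gamma_t$ is exactly symmetric under $R_\phi$. Concretely, as long as $\Gamma_t$ is not $R_\phi$-symmetric, strict containment persists up to $t=0$. Third, I would let $t\to 0$: the solution contracts to a single boundary point $p_0\in\partial B^2$. The set of directions $\phi$ for which $\Gamma_t$ reflects into $\Omega_t$ across $\ell_\phi$ for all $t$ is then seen to be an arc of directions; by the contraction to a point and openness/closedness considerations, this forces the final point $p_0$ to lie on $\ell_\phi$ for every such $\phi$, which is impossible unless the arc degenerates to the single direction $e_{\pi/2}$ (pointing at $p_0$) — equivalently, the reflection must actually be an exact symmetry for the one line $\ell_{\pi/2}$ through $p_0$ and the origin. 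Running the standard Chow--Gulliver dichotomy: for each $\phi$ either $\Gamma_t$ is symmetric under $R_\phi$ for all $t$, or the containment is strict for all $t$ and we can vary $\phi$; sweeping $\phi$ and using that symmetry under two distinct $R_\phi$'s would force too much symmetry (rotational), one concludes $\Gamma_t$ is symmetric across exactly the line through the origin and $p_0$. Finally, identifying $p_0$: by the circle barriers of Proposition \ref{prop:circle barriers} (or simply by the backwards limit being the horizontal segment and the symmetry axis being orthogonal to it in the limit), the symmetry axis is the $y$-axis, so $p_0=(0,1)$, completing the proof.

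The main obstacle I anticipate is making the endpoint/sweeping argument rigorous: showing that exact $R_\phi$-symmetry holds for \emph{some} $\phi$ and that this $\phi$ is forced to be $\pi/2$. The cleanest route is the standard one: define $\Phi$ to be the set of $\phi\in(0,\pi)$ such that $\Gamma_t$ reflects into $\Omega_t$ across $\ell_\phi$ for all $t\in(-\infty,0)$. By the first step $\Phi$ contains a neighborhood of $\pi/2$ relative to which the containment is strict for very negative times; by the maximum principle step $\Phi$ is closed, and where the containment is strict one can perturb $\phi$ so $\Phi$ is also open \emph{unless} one hits an exact symmetry. Hence either $\Phi=(0,\pi)$ — which would make $\Gamma_t$ symmetric under all $R_\phi$, hence a union of concentric circular arcs centered at the origin, contradicting that it meets $\partial B^2$ and contracts to a boundary point — or the boundary of $\Phi$ inside $(0,\pi)$ consists of directions across which $\Gamma_t$ is \emph{exactly} symmetric. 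Since two distinct exact reflection symmetries generate a rotation symmetry (again impossible), there is exactly one such direction $\phi_0$, and $\Gamma_t$ is symmetric across $\ell_{\phi_0}$ for all $t$. The contraction point $p_0$ and the backwards limit segment must both be symmetric across $\ell_{\phi_0}$; since the backwards limit is the horizontal bisector $[-1,1]\times\{0\}$, whose only reflection symmetry through the origin fixing it setwise (and compatible with the forward contraction to a single boundary point rather than to the origin) is $\ell_{\pi/2}$, we get $\phi_0=\pi/2$. Everything else is a routine application of the strong maximum principle and Hopf lemma to the difference of two free boundary solutions, exactly as in \cite{ChGu01}; I would cite that reference for the preservation-of-containment mechanics and only spell out the parts special to the free boundary disc setting, namely the $R_\phi$-invariance of $B^2$ and the orthogonality condition ensuring no boundary contact develops.
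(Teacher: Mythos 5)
Your proposal follows the same route as the paper: Alexandrov reflection across lines through the origin, in the style of Chow--Gulliver \cite{ChGu01}, seeded by the backwards convergence of Lemma \ref{lem:backwards convergence} and propagated forward by the strong maximum principle and the Hopf lemma. The paper's version of your first step is worth noting for its economy: if disjointness of $\Gamma_t$ from its reflection failed along a sequence $t_i\to-\infty$, a reflected pair $p_i,q_i$ would give a chord perpendicular to the reflection line, and the mean value theorem would produce a point of $\Gamma_{t_i}$ with normal parallel to the line's direction, contradicting the smooth convergence of the normal to the vertical --- valid for every line except the $y$-axis itself. Two points in your endgame need repair, though. First, the assertion that the limiting configuration reflects \emph{strictly} inside for every $\phi\neq 0,\pi$ fails exactly at $\phi=\pi/2$ (the half-disc reflects onto itself there), and, more importantly, the \emph{direction} of the containment flips as $\phi$ crosses $\pi/2$: for the limit half-disc $B^2\cap\{y>0\}$, the sector between angles $0$ and $\phi$ reflects into the region when $\phi<\pi/2$, while for $\phi>\pi/2$ it is the complementary sector that does. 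Hence your set $\Phi$, with a fixed continuous choice of $H_\phi^+$, is not a neighbourhood of $\pi/2$, and the open/closed dichotomy together with the ``two reflections generate a rotation'' discussion is an unnecessary detour. Second, containment for all $t$ does not force the extinction point $p_0$ to lie on $\ell_\phi$; it only forces $p_0\in\overline{H_\phi^-}$. The clean conclusion is simply that the (appropriately oriented) containment holds for all $t$ and all $\phi\neq\pi/2$, and letting $\phi\to\pi/2$ from below and from above yields the two opposite inclusions across the $y$-axis, hence equality, hence the symmetry. With that substitution your argument matches the paper's.
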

\begin{proof}
Given any $\o\in (0, \pi)$, we define the halfspace
\[
H_\omega=\{(x, y): (x, y)\cdot (\sin\o, -\cos\o)>0\}
\]
and denote by $R_\omega$ the reflection about $\partial H_\o$. We first claim that, for every $\omega$, there exists $t= t_\omega$ such that
\[
(R_\o\cdot \Gamma_t)\cap (\Gamma_t\cap H_\o)=\emptyset\;\;\text{for all}\;\; t<t_\o\,.
\]
Assume that the claim is not true. Then there exists $\o\in (0, \pi)$, a sequence of times $t_i\to -\infty$, and a sequence of pairs of points $p_i, q_i\in \Gamma_{t_i}$ such that $R_\o(p_i)= q_i$. This implies that the line passing through $p_i$ and $q_i$ is parallel to the vector $(\sin\o, -\cos\o)$, so the mean value theorem yields for each $i$ a point $r_i$ on $\Gamma_{t_i}$ where the normal is parallel to $(\cos\o, \sin \o)$. This contradicts Lemma \ref{lem:backwards convergence}.
\end{proof}

\subsection{Asymptotics for the height}



We begin with a lemma.

\begin{lemma}
For all $t<0$,
\[
\k_s>0\;\;\text{in}\;\; \{x>0\}\cap\Gamma_t
\]
and hence
\begin{equation}\label{eq:min y lower bound general}
\frac{\sin\overline\theta}{1+\cos\overline\theta}\le \underline y\,.
\end{equation}
\end{lemma}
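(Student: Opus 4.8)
The plan is to combine a Sturm-oscillation (zero-counting) argument for $\kappa_s$ with the backwards convergence established in Lemma \ref{lem:backwards convergence}, and then deduce the height bound exactly as in the old-but-not-ancient case of Lemma \ref{lem:crude estimates}. First I would observe that $\kappa_s$ satisfies a linear parabolic equation along the flow (it is the derivative of $\kappa$ with respect to arclength, and $(\partial_t-\Delta)\kappa=\kappa^3$ gives a Jacobi-type equation for $\kappa_s$), so Sturm's theorem \cite{MR953678} applies: the number of interior zeros of $x\mapsto\kappa_s$ on $\Gamma_t$ is finite and non-increasing in $t$. By the reflection symmetry (Lemma \ref{lem:reflection}), $\kappa_s$ vanishes at the point $p_t=\Gamma_t\cap\{x=0\}$ for every $t$; by convexity and the strict maximum principle (the solution is locally uniformly convex, hence $\kappa>0$ at interior times, and the free boundary/Hopf analysis as in \cite[Proposition 2.1]{Stahl2} forces $\kappa_s\ne0$ at the boundary points $q_t$), $\kappa_s$ has a fixed sign near each endpoint. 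So on $\Gamma_t\cap\{x>0\}$ either $\kappa_s>0$ throughout, or $\kappa_s$ has at least one additional interior zero besides (possibly) accumulating behaviour at $p_t$.

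Next I would rule out the second alternative for $t$ sufficiently negative using Lemma \ref{lem:backwards convergence}: as $t\to-\infty$, $\overline\Gamma_t\to[-1,1]\times\{0\}$ in $C^\infty$, and the limit segment has $\kappa\equiv0$; a quantitative refinement — or directly the backwards asymptotic profile (for the uniqueness argument one expects $\mathrm e^{\lambda_0^2 t}\Gamma_t$ to converge to the graph of $A\cosh(\lambda_0 x)$, whose arclength derivative of curvature is strictly positive on $\{x>0\}$) — shows that for $t<T$ we have $\kappa_s>0$ on $\Gamma_t\cap\{x>0\}$. Since the number of sign changes of $\kappa_s$ on $\{x>0\}$ is non-increasing in $t$ by Sturm, and it equals $0$ for all $t<T$, it remains $0$ for all $t<0$; that is, $\kappa_s>0$ on $\{x>0\}\cap\Gamma_t$ for every $t<0$. (One must check $\kappa_s$ cannot become identically zero on a sub-arc, which is immediate since $\kappa$ is real-analytic in space along a smooth curve-shortening flow, or simply because $\kappa_s\equiv0$ on an interval forces the curve to contain a circular arc, incompatible with strict convexity of the whole curve together with the free boundary condition.)

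With $\kappa_s>0$ on $\{x>0\}\cap\Gamma_t$ in hand, the inequality \eqref{eq:min y lower bound general} follows by repeating verbatim the argument for the lower bound on $\underline y$ in the proof of Lemma \ref{lem:crude estimates}: one shows the circle $\mathrm{C}_{\overline\theta(t)}$ (from \eqref{eq:critical circle}) lies nowhere above $\Gamma_t$. Indeed $\mathrm{C}_{\overline\theta(t)}$ lies locally below $\Gamma_t$ near the boundary point $q_t$ (this is the lower bound $\overline\kappa\ge\tan\overline\theta$, which holds here by the same touching argument, valid for any convex free-boundary flow); if $\mathrm{C}_{\overline\theta(t)}$ rose above $\Gamma_t$ somewhere, translate it downward until it first touches $\Gamma_t\cap\{x\ge0\}$ from below at a point $p'_t$, where $\kappa(p'_t)\ge\tan\overline\theta(t)$, and then $\kappa_s>0$ on $\{x>0\}$ forces $\kappa\ge\tan\overline\theta(t)$ on the whole arc from $p'_t$ to $q_t$, which would put that arc — including $p'_t$ — above $\mathrm{C}_{\overline\theta(t)}$, a contradiction. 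Hence $\underline y=y(p_t)$ is bounded below by the height of the lowest point of $\mathrm{C}_{\overline\theta(t)}$ on the $y$-axis, which is $\tfrac{1-\cos\overline\theta}{\sin\overline\theta}=\tfrac{\sin\overline\theta}{1+\cos\overline\theta}$, giving \eqref{eq:min y lower bound general}.

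The main obstacle is the second step — controlling the zero count of $\kappa_s$ as $t\to-\infty$. The clean way is to invoke the backwards convergence to the bisector together with a standard Sturmian stability argument: one cannot simply say ``the limit has no zeros'' since zeros could in principle escape to the endpoints; what is needed is that $\kappa_s$ is, for $t<T$, bounded away from zero on $\{x\ge\delta\}$ for each fixed $\delta>0$ and has a definite sign near the endpoints, which does follow from smooth convergence of the rescaled flow to the strictly-convex profile $A\cosh(\lambda_0 x)$ (or, more elementarily, from \eqref{eq:curvature lower} of Lemma \ref{lem:lower curvature estimate} together with the gradient estimates of Lemma \ref{lem:gradient estimate}, which pin down the shape of $\Gamma_t$ closely enough for very negative $t$). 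Once the count is $0$ for some sequence of times going to $-\infty$, monotonicity does the rest.
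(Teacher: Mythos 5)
Your skeleton matches the paper's in two places: both propagate positivity of $\kappa_s$ forward in time via Sturm's theorem once it is known for $t$ sufficiently negative, and both deduce \eqref{eq:min y lower bound general} by the circle-comparison argument of Lemma \ref{lem:crude estimates}. But the crucial step --- establishing the sign of $\kappa_s$ on $\{x>0\}$ for $t$ near $-\infty$ --- is exactly where your proposal has a genuine gap, and you have identified the obstacle without actually resolving it. Lemma \ref{lem:backwards convergence} only gives $\kappa_s\to 0$ in $C^\infty$; it carries no sign information, and a priori the curvature profile could oscillate at a scale vanishing as $t\to-\infty$, so ``the limit is strictly convex'' proves nothing. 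The fills you gesture at are not available here: the convergence of $\mathrm{e}^{-\lambda_0^2t}y$ to $A\cosh(\lambda_0x)$ for a \emph{general} ancient solution is Proposition \ref{prop:linear analysis}, whose proof uses \eqref{eq:min y lower bound general} --- i.e.\ the very lemma you are proving --- so that route is circular; and Lemmas \ref{lem:lower curvature estimate} and \ref{lem:gradient estimate} are proved only for the particular constructed solution (via its old-but-not-ancient approximants), whereas the present lemma concerns an \emph{arbitrary} convex ancient solution. Moreover, even for the constructed solution the paper only controls the rescaled height, not the rescaled $C^2$ profile, so ``$\kappa_s$ of the limit profile is positive'' is not something the paper ever establishes or needs.

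The paper closes this gap with a maximum-principle argument on the backwards-infinite time interval rather than a limiting one: choose $T$ with $\kappa<\tfrac27$ for $t<T$ (possible by Lemma \ref{lem:backwards convergence}) and consider $v_\varepsilon\doteqdot\kappa_s+\varepsilon(1-\langle\gamma,\nu\rangle)$ on $\{x\ge0\}$. Since $v_\varepsilon>\varepsilon$ at the right boundary point (where $\kappa_s=\kappa>0$ and $\langle\gamma,\nu\rangle=0$), $v_\varepsilon>0$ on the axis of symmetry, and $v_\varepsilon\to\varepsilon$ as $t\to-\infty$, a failure of $v_\varepsilon\ge0$ would produce a \emph{first} interior zero in $(-\infty,T)$; the evolution equation for $v_\varepsilon$ there gives $0\ge\varepsilon(2-7\kappa)\kappa>0$, a contradiction. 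Letting $\varepsilon\to0$ gives $\kappa_s\ge0$ for $t\le T$, the strong maximum principle and Hopf lemma upgrade this to strict positivity, and only then does Sturm take over. If you want to keep your structure, you must replace your second step with an argument of this kind (or some other device that works uniformly on $(-\infty,T)$ without presupposing the backwards curvature asymptotics).
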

\begin{proof}
Choose $T>-\infty$ so that $\k<\frac{2}{7}$ for $t<T$ and, given $\e>0$, set
\[
v_\varepsilon\doteqdot \k_s+\e(1-\langle \gamma, \nu\rangle)\,.
\]
We claim that $v_\varepsilon\ge 0$ in $\{x\ge 0\}\cap (-\infty,T)$. Suppose that this is not the case. Since at the spatial boundary $v_\varepsilon>\varepsilon$, 
and $v_\varepsilon\to \varepsilon$ as $t\to-\infty$, there must exist a first time in $(-\infty,T)$ and an interior point at which $v_\varepsilon=0$. But, at such a point,
\[
\begin{split}
0\ge\left(\partial_t-\Delta\right)v_\varepsilon&= \k^2(\k_s-\e\langle x, \nu\rangle)+3\k^2\k_s+2\e\k\\
&=-\e\k^2-3\e\k^2(1-\langle x, \nu\rangle)+2\e\k\\
&\ge\e(2-7\k)\k\\
&>0\,,
\end{split}
\]
which is absurd. Now take $\e\to 0$ to obtain $\kappa_s\ge 0$ in $\{x\ge 0\}\cap\Gamma_t$ for $t\in (-\infty,T]$. Since $\kappa_s=0$ at the $y$-axis and $\kappa_s=\kappa>0$ at the right boundary point, the strong maximum principle and the Hopf boundary point lemma imply that $\kappa_s>0$ in $\{x>0\}\cap\Gamma_t$ for $t\in (-\infty,T]$. But then Sturm's theorem implies that $\kappa_s$ does not develop additional zeroes up to time $0$.

Having established the first claim, the second follows as in Lemma \ref{lem:crude estimates}.
\end{proof}

\begin{proposition}\label{prop:linear analysis}
If we define $A\in(0,\infty)$ as in \eqref{eq:backwards asymptotics}, then
\[
\mathrm{e}^{\lambda_0^2t}y(x,t)\to A\cosh(\lambda_0x)\;\;\text{uniformly as}\;\; t\to-\infty\,.
\]
\end{proposition}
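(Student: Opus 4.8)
The plan is to show that the rescaled flow $\tilde\Gamma_\tau$ obtained by setting $\tau = -\tfrac{1}{\lambda_0^2}\log(-t)$ (or simply studying $v(x,t) \doteqdot \mathrm{e}^{\lambda_0^2 t} y(x,t)$ directly) converges to a stationary solution of the linearized operator, namely a multiple of $\cosh(\lambda_0 x)$. First I would record the structural facts already available: by Lemma \ref{lem:backwards convergence} the curve flattens onto $[-1,1]\times\{0\}$ as $t\to-\infty$, so for $t$ very negative we may write $\Gamma_t$ as a graph $x\mapsto y(x,t)$ over $[-1,1]$ with $\vert y_x\vert$ and $y$ tending to $0$ uniformly, and with the free boundary condition forcing $y_x(\pm 1, t) = \pm\infty$ in the graph parametrization — more precisely, in terms of the turning angle, $\theta = \pm\overline\theta(t)$ at $x=\pm1$ with $\tan\overline\theta(t) = y_x(1,t)/\!\sqrt{\text{(slope normalization)}}$, and $\overline\theta(t)\to 0$. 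The linearization of curve shortening flow (written for the height $y$) about the zero solution is the heat-type equation $y_t = y_{xx}$, and the free boundary/orthogonality condition linearizes, using $\sin\overline\theta \le \mathrm{e}^t$ from \eqref{sin theta upper bound} together with $\overline y = \sin\overline\theta$, to the Robin-type condition $y_x(\pm1,t) = \pm y(\pm1,t) + (\text{higher order})$. The eigenvalue problem $-\phi'' = \mu\phi$ on $[-1,1]$ with $\phi'(\pm1) = \pm\phi(\pm1)$ and $\phi$ even has first eigenfunction $\phi(x) = \cosh(\lambda_0 x)$ with eigenvalue $\mu = -\lambda_0^2$, precisely because $\lambda_0\tanh\lambda_0 = 1$; all other eigenvalues are strictly larger (more negative decay is impossible for the slowest mode). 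This identifies $\cosh(\lambda_0 x)$ as the expected limiting profile.

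Next I would make the convergence rigorous. The key monotonicity \eqref{eq:rescaled height monotonicity} gives that $\mathrm{e}^{-\lambda_0^2 t}\sin(\lambda_0 y(x,t))$ is non-decreasing in $t$, and combined with Lemma \ref{lem:backwards asymptotics} (which handles the value at $x=0$) and the uniform gradient bound $\vert y_x\vert \le \tan\overline\theta \le C\mathrm{e}^t$-type control near $t=-\infty$ from \eqref{eq:kappa bounds}, \eqref{sin theta upper bound} and Lemma \ref{lem:gradient estimate}, one gets that $v(x,t) = \mathrm{e}^{\lambda_0^2 t}y(x,t)$ is bounded above and below by positive constants uniformly on compact $x$-subsets of $(-1,1)$, and in fact, because of the gradient bound forcing the graph to be nearly flat with controlled slope, uniformly on all of $[-1,1]$. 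Standard parabolic interior-and-boundary estimates (Stahl's estimates \cite{Stahl1}, as already used) then give $C^\infty_{\mathrm{loc}}$ precompactness of the family $\{v(\cdot,t)\}$ as $t\to-\infty$. Any subsequential limit $v_\infty$ is a nonnegative, even, bounded function on $[-1,1]$ which, upon passing to the limit in the (rescaled) equation and boundary condition — here the error terms vanish because $y\to0$, $\overline\theta\to0$, and $\sin\overline\theta \le \mathrm{e}^t$ — must solve $v_\infty'' = \lambda_0^2 v_\infty$ on $(-1,1)$ with $v_\infty'(\pm1) = \pm v_\infty(\pm1)$; evenness and the boundary condition force $v_\infty = A'\cosh(\lambda_0 x)$ for some $A' \ge 0$. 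Evaluating at $x=0$ and invoking \eqref{eq:backwards asymptotics} pins down $A' = A > 0$, so the limit is independent of the subsequence, giving full convergence $v(\cdot,t)\to A\cosh(\lambda_0 x)$; upgrading from $x=0$ control to uniform-in-$x$ control uses the uniform two-sided bounds and the $C^1$ convergence already obtained.

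The main obstacle I anticipate is making the linearization of the \emph{free boundary condition} precise and controlling its error terms well enough to pass to the limit — in particular, justifying that in the limit $v_\infty$ satisfies exactly the Robin condition $v_\infty'(\pm 1) = \pm v_\infty(\pm 1)$ rather than some degenerate or lossy version of it, since the graph representation degenerates at $x=\pm1$ (the curve meets $\partial B^2$ orthogonally, so $y_x\to\infty$ there along any fixed timeslice, but the relevant rescaled quantity $\sin\overline\theta/\,\mathrm{e}^{\lambda_0^2 t}$ is what must be shown to converge). I would handle this by working with the turning-angle parametrization near the boundary, where $\overline y(t) = \sin\overline\theta(t)$ is an exact identity, rewriting the orthogonality relation in terms of $\overline\theta$ and the rescaled height, and using \eqref{sin theta upper bound} to show $\sin\overline\theta(t) = \overline y(t)(1 + O(\mathrm{e}^{2t}))$ so that the boundary condition for $v$ is $v_x(\pm1,t) = \pm v(\pm1,t) + O(\mathrm{e}^{2t})$-type, whose error is integrable and vanishes in the limit. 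A secondary technical point is ruling out the possibility that the slowest decay rate is strictly faster than $\lambda_0^2$ (which would force $A=0$) — but this is exactly what Lemma \ref{lem:backwards asymptotics} excludes, so it suffices to cite it. The remaining steps are routine parabolic compactness and ODE analysis.
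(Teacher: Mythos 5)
Your identification of the limiting Robin eigenvalue problem and of $\cosh(\lambda_0 x)$ as the unique (even) slowly-decaying mode is correct and matches the paper. But there is a genuine gap at the heart of the argument: you claim that any subsequential limit $v_\infty$ of the time-slices $v(\cdot,t_j)$, $v=\mathrm{e}^{-\lambda_0^2 t}y$, ``upon passing to the limit in the equation'' solves the \emph{stationary} problem $v_\infty''=\lambda_0^2 v_\infty$. The rescaled height satisfies a parabolic equation $v_t=v_{xx}-\lambda_0^2v+(\text{errors})$, and compactness of the time-slices alone gives no reason for the time-derivative term to vanish along the subsequence --- that $v_t\to 0$ is essentially the convergence you are trying to prove. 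There are two ways to close this. One is to use the monotonicity \eqref{eq:rescaled height monotonicity}, which upgrades subsequential to full pointwise convergence and hence forces any parabolic limit to be time-independent; but \eqref{eq:rescaled height monotonicity} rests on Lemma \ref{lem:lower curvature estimate}, which is proved only for the \emph{constructed} solution (by comparison with its specific Angenent-oval initial data), while the proposition must hold for an \emph{arbitrary} convex ancient solution in order to feed into the uniqueness argument of Proposition \ref{prop:uniqueness}. The paper's route is different: it passes to a limit of the whole time-translated flows $y^{\tau_j}$, obtaining an \emph{eternal} solution $y^\infty$ of the linear heat equation with Robin condition on $[-1,1]\times(-\infty,\infty)$ (the boundary condition being justified by a weak formulation with carefully adapted test functions), and then uses the spectral decomposition together with the backward-in-time bound $\limsup_{t\to-\infty}\mathrm{e}^{-\lambda_0^2t}\overline y<\infty$ to kill every mode except $\mathrm{e}^{\lambda_0^2t}\cosh(\lambda_0x)$. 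That classification of eternal solutions is what replaces your (unjustified) stationarity claim.

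A second, related gap is the identification of the constant. Evaluating at $x=0$ and ``invoking \eqref{eq:backwards asymptotics}'' pins down the amplitude only for the constructed solution, for which that limit is known to exist. For a general solution, the two-sided bounds on $\mathrm{e}^{-\lambda_0^2t}\overline y$ are obtained in the paper by intersection/comparison with the constructed solution (this is how \eqref{eq:sup bound sublimit} is derived, together with \eqref{eq:min y lower bound general}), and the amplitude of the sublimit is then shown to equal $A$ by the avoidance principle: if it differed, one solution would eventually lie strictly above the other, contradicting the fact that both contract to the same boundary point. Without this step you get full convergence for the constructed solution only, which is not enough for the uniqueness theorem. (A minor further point: the graph gradient does not blow up at the free boundary --- the tangent there is radial, so $y_x=\pm\tan\overline\theta$ is small, and the Robin condition arises from $\tan\overline\theta/\sin\overline\theta\to1$; your worry about a degenerate graph parametrization at $x=\pm1$ is misplaced, though your proposed fix via the turning angle would also work.)
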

\begin{proof}
Given $\tau<0$, consider the rescaled height function
\[
y^{\tau}(x,t)\doteqdot \mathrm{e}^{-\lambda_0^2\tau}y(x,t+\tau)\,,
\]
which is defined on the time-translated flow $\{\Gamma^\tau_t\}_{t\in(-\infty,-\tau)}$, where $\Gamma^\tau_t\doteqdot \Gamma_{t+\tau}$. Note that
\begin{equation}\label{eq:heat equation on the flow}
\left\{\begin{aligned}
(\pd_t-\Delta^\tau)y^{\tau}={}&0\;\;\text{in}\;\;\{\Gamma_t^\tau\}_{t\in(-\infty,-\tau)}\\
\inner{\cd^\tau y^\tau}{N}={}&y\;\;\text{on}\;\;\{\pd\Gamma_t^\tau\}_{t\in(-\infty,-\tau)}\,,
\end{aligned}\right.
\end{equation}
where $\cd^\tau$ and $\Delta^\tau$ are the gradient and Laplacian on $\{\Gamma^\tau_t\}_{t\in(-\infty,-\tau)}$, respectively, and $N$ is the outward unit normal to $\pd B^2$. 

Since $\{\Gamma_t\}_{t\in(-\infty,0)}$ reaches the origin at time zero, it must intersect the constructed solution for all $t<0$. In particular, the value of $\underline y$ on the former can at no time exceed the value of $\overline y$ on the latter. But then \eqref{eq:backwards asymptotics} and \eqref{eq:min y lower bound general} yield
\begin{equation}\label{eq:sup bound sublimit}
\limsup_{t\to-\infty}\mathrm{e}^{-\lambda_0^2t}\overline y<\infty\,.
\end{equation}
This implies a uniform bound for $y^\tau$ on $\{\Gamma^\tau_t\}_{t\in(-\infty,T]}$ for any $T\in\R$. 
So Alaoglu's theorem yields a sequence of times $\tau_j\to-\infty$ such that $y^{\tau_j}$ converges in the weak$^\ast$ topology as $j\to\infty$ to some $y^\infty\in L^2_{\mathrm{loc}}([-1,1]\times (-\infty,\infty))$. 
Since convexity and the boundary condition imply a uniform bound for $\cd^\tau y^\tau$ on any time interval of the form $(-\infty,T]$, we may also arrange that the convergence is uniform in space at time zero, say.

Weak$^\ast$ convergence ensures that $y^\infty$ satisfies the problem
\begin{equation}\label{eq:heat equation on the interval}
\left\{\begin{aligned}
y_t={}&y_{xx}\;\;\text{in}\;\; [-1,1]\times(-\infty,\infty)\\
y_x(\pm 1)={}&\pm y(\pm 1)\,.
\end{aligned}\right.
\end{equation}
Indeed, a smooth function 
$y^\tau$ satisfies the boundary value problem \eqref{eq:heat equation on the flow} (and analogously for \eqref{eq:heat equation on the interval}) if and only if
\[
\int_{-\infty}^{-\tau}\!\int_{\Gamma_t^\tau}y^\tau(\pd_t-\Delta^\tau)^\ast\eta=0
\]
for all smooth $\eta$ which are compactly supported in time and satisfy
\[
\cd^\tau\eta\cdot N=\eta\;\;\text{on}\;\;\pd\Gamma^\tau_t \,,
\]
where $(\pd_t-\Delta^\tau)^\ast\doteqdot -(\pd_t+\Delta^\tau)$ is the formal $L^2$-adjoint of the heat operator. Since $\{\Gamma_t^\tau\}_{t\in(-\infty,-\tau)}$ converges uniformly in the smooth topology to the stationary interval $\{[-1,1]\times\{0\}\}_{t\in(-\infty,\infty)}$ as $\tau\to-\infty$, we conclude that 
the limit $y^\infty$ must satisfy \eqref{eq:heat equation on the interval} in the $L^2$ sense (and hence in the classical sense due to the $L^2$ theory for the heat equation). Indeed, by the definition of smooth convergence, we may (after possibly applying a diffeomorphism) parametrize each flow $\{\overline\Gamma{}_t^{\tau_j}\}_{t\in(-\infty,-\tau_j)}$ over $I\doteqdot [-1,1]$ by a family of embeddings $\gamma^j_t:I\times(-\infty,-\tau_j)\to \overline B{}^2$ which converge in $C^\infty_{\mathrm{loc}}(I\times(-\infty,\infty))$ as $j\to\infty$ to the stationary embedding $(x,t)\mapsto xe_1$. Given $\eta\in C^\infty_0(I\times(-\infty,\infty))$ satisfying $\eta_\zeta(\pm1)=\pm\eta$, set $\eta^j\doteqdot \varphi^j\eta$, where $\varphi^j:[-1,1]\times(-\infty,-\tau^j)\to\R$ is defined by
\[
\varphi^j_\zeta+(1-\vert\gamma^j_\zeta\vert)\varphi^j=0\,,\;\; \varphi^j(0,t)=1\,. 
\]
That is, $\varphi^j(\zeta,t)=\mathrm{e}^{s^j(\zeta,t)-\zeta}$, where $s^j(\zeta,t)\doteqdot\int_0^\zeta\vert\gamma^j_\zeta(\xi,t)\vert\,d\xi$. This ensures that $\cd^{\tau^j}\eta^j\cdot N=\eta^j$ at the boundary, and hence
\bann
0={}&\int_{-\infty}^\infty\int_Iy^{\tau_j}(\pd_t-\Delta^{\tau_j})^\ast\eta^jds^jdt\,.
\eann
Since 
$\varphi^j\to 1$ in $C^\infty_{\mathrm{loc}}(I\times(-\infty,\infty))$, a short computation reveals that
\[
0=\int_{-\infty}^\infty\int_Iy^{\infty}(\pd_t-\Delta)^\ast\eta\,d\zeta dt\,.
\]

Finally, we characterize the limit (uniqueness of which implies full convergence, completing the proof). Separation of variables leads us to consider the problem
\[
\left\{\begin{aligned}
-\phi_{xx}={}&\mu\phi\;\;\text{in}\;\; [-1,1]\\
\phi_x(\pm 1)={}&\pm \phi(\pm 1)\,.
\end{aligned}\right.
\]
There is only one negative eigenspace, and its frequency 
turns out to be $k_{-1}=\lambda_0$, with the corresponding mode given by
\[
\phi_{-1}(x)\doteqdot \cosh(k_{-1}x)\,.
\]
%
Thus, recalling \eqref{eq:sup bound sublimit}, we are able to conclude that
\[
y^\infty(x,t)=A\mathrm{e}^{\lambda_0^2t}\cosh(\lambda_0x) 
\]
for some $A\ge 0$. 
In particular,
\[
\mathrm{e}^{-\lambda_0^2\tau_j}y(x,\tau_j)=y^{\tau_j}(x,0)\to A\cosh(\lambda_0x)\;\;\text{uniformly as}\;\; j\to\infty\,.
\]
Now, if $A$ is not equal to the corresponding value on the constructed solution (note that the full limit exists for the latter), then one of the two solutions must lie above the other at time $\tau_j$ for $j$ sufficiently large. But this violates the avoidance principle. 
\end{proof}

\subsection{Uniqueness}

Uniqueness of the constructed ancient solution now follows directly from the avoidance principle.

\begin{proposition}\label{prop:uniqueness}
Modulo time translation and rotation about the origin, there is only one convex, locally uniformly convex ancient solution to free boundary curve shortening flow in the disc.
\end{proposition}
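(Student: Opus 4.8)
The plan is to combine the backwards asymptotics established in Proposition \ref{prop:linear analysis} with the avoidance principle to pin down any ancient solution against the explicitly constructed one. Let $\{\Gamma_t\}_{t\in(-\infty,0)}$ be an arbitrary convex, locally uniformly convex ancient free boundary curve shortening flow in the disc, and let $\{\Gamma^\ast_t\}_{t\in(-\infty,0)}$ denote the solution constructed in Proposition \ref{prop:existence}. By Stahl's theorem we may assume, after a time translation, that $\Gamma_t$ shrinks to a boundary point as $t\to0$; after a rotation (Lemma \ref{lem:backwards convergence}) we may assume the backwards limit is the horizontal bisector, and by Lemma \ref{lem:reflection} the solution is reflection symmetric about the $y$-axis. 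Then, writing both solutions as graphs $x\mapsto y(x,t)$ and $x\mapsto y^\ast(x,t)$ over the $x$-axis, Proposition \ref{prop:linear analysis} gives
\[
\mathrm{e}^{\lambda_0^2t}y(x,t)\to A\cosh(\lambda_0 x)\;\;\text{and}\;\;\mathrm{e}^{\lambda_0^2t}y^\ast(x,t)\to A^\ast\cosh(\lambda_0 x)
\]
uniformly in $x$ as $t\to-\infty$, for constants $A,A^\ast\in(0,\infty)$ (the latter being exactly the constant $A$ from Lemma \ref{lem:backwards asymptotics}).

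The next step is to reduce to the case $A=A^\ast$ by a further time translation. Since replacing $\{\Gamma^\ast_t\}$ by $\{\Gamma^\ast_{t+\sigma}\}$ multiplies its asymptotic constant by $\mathrm{e}^{-\lambda_0^2\sigma}$, we may choose $\sigma$ so that the time-translated constructed solution has the same asymptotic constant $A$ as $\{\Gamma_t\}$. Relabelling, we thus assume $A=A^\ast$. Now suppose for contradiction that $\Gamma_{t_0}\ne\Gamma^\ast_{t_0}$ for some $t_0<0$. By the avoidance principle (strong maximum principle plus Hopf boundary point lemma, valid here since both families are convex with free boundary and hence either stationary bisectors — excluded by local uniform convexity — or locally uniformly convex at interior times, so the comparison arguments used throughout Section \ref{sec:existence} apply), the two flows, once disjoint or once one lies strictly above the other, must remain so for all earlier times; and if they ever touch without coinciding, they must have coincided at all later times, forcing $\Gamma_{t_0}=\Gamma^\ast_{t_0}$, a contradiction. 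So one of the two, say $\Gamma_t$, lies strictly above $\Gamma^\ast_t$ for all $t\le t_0$. But then $\mathrm{e}^{\lambda_0^2t}y(x,t)\ge \mathrm{e}^{\lambda_0^2t}y^\ast(x,t)$ at, say, $x=0$ for all $t\le t_0$, and letting $t\to-\infty$ gives $A\ge A^\ast=A$, which is consistent — so this crude comparison is not quite enough on its own.

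The genuinely decisive point, and the one I expect to be the main obstacle, is upgrading "asymptotically equal" to "eventually equal". The clean way around the issue above is to note that if $\Gamma_t$ lies strictly above $\Gamma^\ast_t$ for all $t\le t_0$ then, running the avoidance principle \emph{forwards}, $\Gamma_t$ lies strictly above $\Gamma^\ast_t$ for \emph{all} $t\in(-\infty,0)$; but both solutions converge to a single point on $\pd B^2$ as $t\to0$ (Proposition \ref{prop:existence} and Stahl's theorem), and strict ordering of the enclosed convex regions for all $t<0$ together with the free boundary condition forces those two points to be distinct — or else forces a contradiction with the areas $A(t)\to$ (area of half-disc) established in Lemma \ref{lem:backwards convergence} applied to both. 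Concretely: strict ordering $\overline\Omega^\ast_t\subsetneq\Omega_t$ persisting to $t=0$ is impossible because $\mathrm{area}(\Omega_t)-\mathrm{area}(\Omega^\ast_t)=\int_t^0(\int_{\Gamma_s}d\theta-\int_{\Gamma^\ast_s}d\theta)\,ds$ must tend to a \emph{positive} limit as $t\to-\infty$ (strict containment of convex regions with the same Hausdorff limit), yet at $t=0$ the difference of areas is $0$, and the integrand's sign would have to be controlled — here instead one simply observes that two nested shrinking solutions ending at interior-boundary points $p\ne p'$ contradict the avoidance principle with the shrinking circles of Proposition \ref{prop:circle barriers} centred appropriately, while $p=p'$ forces the strict containment to degenerate. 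Hence no such $t_0$ exists, so $\Gamma_t=\Gamma^\ast_t$ for all $t$, which is the claim. I would write this final contradiction carefully, as it is where the topology of the disc and the free boundary condition genuinely enter; everything before it is soft.
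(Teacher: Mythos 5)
Your setup (normalizing both solutions, invoking Proposition \ref{prop:linear analysis}, and trying to combine the matching asymptotics with avoidance) is the right strategy and is essentially the paper's, but the execution has a genuine gap at the decisive step. Having assumed $\Gamma_{t_0}\ne\Gamma^\ast_{t_0}$, you assert that ``one of the two, say $\Gamma_t$, lies strictly above $\Gamma^\ast_t$ for all $t\le t_0$.'' Nothing forces this: two distinct convex, symmetric free boundary curves can \emph{cross}, in which case neither is ordered above the other, no touching point has the one-sided configuration needed for the strong maximum principle, and no contradiction arises. Your dichotomy (``disjoint/ordered'' versus ``touching hence coinciding'') omits exactly this crossing case, and the claim that disjointness ``must remain so for all earlier times'' is also backwards --- the avoidance principle propagates disjointness \emph{forward} in time, so it cannot manufacture an ordering at $t=-\infty$ from information at $t_0$. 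Since you have already arranged $A=A^\ast$, the asymptotics give no strict separation at $t=-\infty$ either, so the ordering you need is never established. The final contradiction you sketch (nested regions shrinking to the same point) is recoverable --- strictly ordered disjoint solutions have non-decreasing distance, so they cannot both converge to $(0,1)$ --- but it is downstream of the unproven ordering.

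The missing idea is the paper's time-translation trick: for $\tau>0$ compare $\Gamma'_{t+\tau}$ with the constructed $\Gamma_t$. The translate has asymptotic profile $A\mathrm{e}^{\lambda_0^2\tau}\cosh(\lambda_0x)$, which is \emph{strictly} above $A\cosh(\lambda_0x)$, and the convergence in Proposition \ref{prop:linear analysis} is uniform in $x$, so $\Gamma'_{t+\tau}$ lies strictly above $\Gamma_t$ for $-t$ sufficiently large. \emph{Now} the avoidance principle applies in its correct (forward) direction and propagates this ordering to all $t<0$. Letting $\tau\searrow0$ yields the weak ordering of $\Gamma'_t$ over $\Gamma_t$ that your argument lacks; since both solutions reach $(0,1)$ at time zero they must intersect for all $t<0$ (otherwise disjointness and non-decreasing separation would persist forward), and weak ordering plus intersection lets the strong maximum principle force coincidence. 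Without introducing the strictly positive translation parameter, the argument cannot close.
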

\begin{proof}
Denote by $\{\Gamma_t\}_{t\in(-\infty,0)}$ the constructed ancient solution and let $\{\Gamma'_t\}_{t\in(-\infty,0)}$ be a second ancient solution which, without loss of generality, contracts to the point $(0,1)$ at time $0$. Given any $\t>0$, consider the time-translated solution $\{\Gamma_t^\tau\}_{t\in(-\infty,-\tau)}$ defined by $\Gamma_t^\t=\Gamma'_{t+\tau}$. By Proposition \ref{prop:linear analysis},
\[
\mathrm{e}^{-\lambda_0^2t}y^\tau(x,t)
\to A\mathrm{e}^{\lambda_0^2\tau}\cosh(\lambda_0x)\;\;\text{as}\;\; t\to-\infty
\]
uniformly in $x$. So $\Gamma^\tau_t$ lies above $\Gamma_t$ for $-t$ sufficiently large. The avoidance principle then ensures that $\Gamma^\tau_t$ lies above $\Gamma_t$ for all $t\in(-\infty,0)$. Taking $\tau\to 0$, we find that $\Gamma'_t$ lies above $\Gamma_t$ for all $t<0$. Since the two curves reach the point $(0,1)$ at time zero, they intersect for all $t<0$ by the avoidance principle. The strong maximum principle then implies that the two solutions coincide for all $t$.
\end{proof}

\section{Supplement: nonexistence of rotators}\label{sec:no rotators}

Free boundary curve shortening flow in $B^2$ is invariant under rotations about the origin, so it is natural to seek solutions which move by rotation; that is, solutions $\gamma:(-\frac{L}{2},\frac{L}{2})\times(-\infty,\infty)\to \overline B{}^2$ satisfying
\[
\gamma(\cdot ,t)=\mathrm{e}^{iBt}\gamma(\cdot,0)
\]
for some $B>0$. Differentiating yields the \emph{rotator equation}
\begin{equation}\label{eq:rotator}
\kappa=-B\inner{\gamma}{\tau}.
\end{equation}

It turns out, however, that there are no solutions to \eqref{eq:rotator} 
in $B^2$ satisfying the free boundary condition.

\begin{proof}[Proof of Theorem \ref{thm:no rotators}]
Following Halldorsson \cite{Halldorsson}, we rewrite the rotator equation as the pair of ordinary differential equations
\begin{equation}\label{eq:rotator system}
x'=B+xy\;\;\text{and}\;\;y'=-x^2\,,
\end{equation}
where
\[
x\doteqdot B\inner{\gamma}{\tau}\;\;\text{and}\;\;y\doteqdot B\inner{\gamma}{\nu}\,.
\]
Arc-length parametrized solutions $\gamma$ to the rotator equation \eqref{eq:rotator} can be recovered from solutions to the system \eqref{eq:rotator system} via
\[
\gamma\doteqdot B^{-1}(x+iy)\mathrm{e}^{i\theta}\,,\;\;\theta(s)\doteqdot-\int_0^sx(\sigma)d\sigma\,,
\]
and this parametrization is unique up to an ambient rotation and a unit linear reparametrization, i.e. $(\theta,s)\mapsto (\pm\theta+\theta_0,\pm s+s_0)$\,.

Note that
\[
\vert\gamma\vert=B^{-1}\sqrt{x^2+y^2}\,.
\]
So we seek solutions $(x,y):(-\frac{L}{2},\frac{L}{2})\to B{}^2$ to \eqref{eq:rotator system} satisfying the free boundary condition $(x(\pm \frac{L}{2}),y(\pm \frac{L}{2}))=(\pm B,0)$. 

Let $\gamma$ be such a solution. Since \eqref{eq:rotator system} can be uniquely solved with initial condition $(x(s_0),y(s_0))=(B,0)$ (which corresponds to $\gamma(s_0)\in \pd B^2$ with $\inner{\gamma}{\tau}|_{s_0}=1$), we find that $\gamma$ must be invariant under rotation by $\pi$ about the origin. In particular, the points $\gamma(-\frac{L}{2})$ and $\gamma(\frac{L}{2})$ are diametrically opposite. It follows that $\gamma(0)$ is the origin. Indeed, for topological reasons, $\gamma$ must cross the line orthogonally bisecting the segment joining its endpoints an odd number of times (with multiplicity). But since the rotational invariance pairs each crossing above the origin with one below, we are forced to include the origin in the set of crossings. 
We conclude that
\[
0=y(\tfrac{L}{2})=\int_0^{\frac{L}{2}}y'=-\int_0^\frac{L}{2}x^2ds\,,
\]
which is impossible since $x(\frac{L}{2})=B>0$. This completes the proof.
\end{proof}

\bibliographystyle{acm}
\bibliography{../../bibliography}

\begin{thebibliography}{10}

\bibitem{An12}
{\sc Andrews, B.}
\newblock Noncollapsing in mean-convex mean curvature flow.
\newblock {\em Geom. Topol. 16}, 3 (2012), 1413--1418.

\bibitem{AndrewsBryanisoperimetric}
{\sc Andrews, B., and Bryan, P.}
\newblock A comparison theorem for the isoperimetric profile under
  curve-shortening flow.
\newblock {\em Comm. Anal. Geom. 19}, 3 (2011), 503--539.

\bibitem{AndrewsBryandistance}
{\sc Andrews, B., and Bryan, P.}
\newblock Curvature bound for curve shortening flow via distance comparison and
  a direct proof of {G}rayson's theorem.
\newblock {\em J. Reine Angew. Math. 653\/} (2011), 179--187.

\bibitem{MR953678}
{\sc Angenent, S.}
\newblock The zero set of a solution of a parabolic equation.
\newblock {\em J. Reine Angew. Math. 390\/} (1988), 79--96.

\bibitem{Bakas2}
{\sc Bakas, I., and Sourdis, C.}
\newblock Dirichlet sigma models and mean curvature flow.
\newblock {\em Journal of High Energy Physics 2007}, 06 (2007), 057.

\bibitem{BLT3}
{\sc Bourni, T., Langford, M., and Tinaglia, G.}
\newblock Convex ancient solutions to curve shortening flow.
\newblock {\em Calc. Var. Partial Differential Equations 59}, 4 (2020), 133.

\bibitem{BrLo}
{\sc {Bryan}, P., and {Louie}, J.}
\newblock {Classification of convex ancient solutions to curve shortening flow
  on the sphere.}
\newblock {\em {J. Geom. Anal.} 26}, 2 (2016), 858--872.

\bibitem{ChMa}
{\sc Choi, K., and Mantoulidis, C.}
\newblock Ancient gradient flows of elliptic functionals and {M}orse index.
\newblock Preprint available at
  \href{https://arxiv.org/abs/1902.07697}{arXiv:1902.07697}.

\bibitem{ChGu01}
{\sc Chow, B., and Gulliver, R.}
\newblock Aleksandrov reflection and geometric evolution of hypersurfaces.
\newblock {\em Comm. Anal. Geom. 9}, 2 (2001), 261--280.

\bibitem{DHS}
{\sc Daskalopoulos, P., Hamilton, R., and Sesum, N.}
\newblock Classification of compact ancient solutions to the curve shortening
  flow.
\newblock {\em J. Differential Geom. 84}, 3 (2010), 455--464.

\bibitem{Firey}
{\sc Firey, W.~J.}
\newblock Shapes of worn stones.
\newblock {\em Mathematika 21\/} (1974), 1--11.

\bibitem{GaHa86}
{\sc {Gage}, M., and {Hamilton}, R.}
\newblock {The heat equation shrinking convex plane curves.}
\newblock {\em {J. Differ. Geom.} 23\/} (1986), 69--96.

\bibitem{Ga84}
{\sc Gage, M.~E.}
\newblock Curve shortening makes convex curves circular.
\newblock {\em Invent. Math. 76}, 2 (1984), 357--364.

\bibitem{Halldorsson}
{\sc Halldorsson, H.~P.}
\newblock Self-similar solutions to the curve shortening flow.
\newblock {\em Trans. Amer. Math. Soc. 364}, 10 (2012), 5285--5309.

\bibitem{MR1375255}
{\sc Hamilton, R.~S.}
\newblock The formation of singularities in the {R}icci flow.
\newblock In {\em Surveys in differential geometry, {V}ol. {II} ({C}ambridge,
  {MA}, 1993)}. Int. Press, Cambridge, MA, 1995, pp.~7--136.

\bibitem{MR1369140}
{\sc Hamilton, R.~S.}
\newblock Isoperimetric estimates for the curve shrinking flow in the plane.
\newblock In {\em Modern methods in complex analysis ({P}rinceton, {NJ},
  1992)}, vol.~137 of {\em Ann. of Math. Stud.} Princeton Univ. Press,
  Princeton, NJ, 1995, pp.~201--222.

\bibitem{HuiskenNonparametric}
{\sc Huisken, G.}
\newblock Nonparametric mean curvature evolution with boundary conditions.
\newblock {\em J. Differential Equations 77}, 2 (1989), 369--378.

\bibitem{Huisken96}
{\sc Huisken, G.}
\newblock A distance comparison principle for evolving curves.
\newblock {\em Asian J. Math. 2}, 1 (1998), 127--133.

\bibitem{Mullins}
{\sc Mullins, W.~W.}
\newblock Two-dimensional motion of idealized grain boundaries.
\newblock {\em J. Appl. Phys. 27\/} (1956), 900--904.

\bibitem{MR1813971}
{\sc Sapiro, G.}
\newblock {\em Geometric partial differential equations and image analysis}.
\newblock Cambridge University Press, Cambridge, 2001.

\bibitem{Stahl2}
{\sc Stahl, A.}
\newblock Convergence of solutions to the mean curvature flow with a {N}eumann
  boundary condition.
\newblock {\em Calc. Var. Partial Differential Equations 4}, 5 (1996),
  421--441.

\bibitem{Stahl1}
{\sc Stahl, A.}
\newblock Regularity estimates for solutions to the mean curvature flow with a
  {N}eumann boundary condition.
\newblock {\em Calc. Var. Partial Differential Equations 4}, 4 (1996),
  385--407.

\bibitem{vonNeumann}
{\sc von Neumann, J.}
\newblock In {\em Metal Interfaces}. American Society for Testing Materials,
  Cleveland, 1952, p.~108.

\end{thebibliography}

\end{document}